\tikzset{
	level 1/.style = {sibling distance = 1.5cm},
	level 2/.style = {sibling distance = 0.8cm},
    level distance = 1.0 cm
}
\tikzstyle{snakeline} = [decorate, decoration={snake, amplitude=.4mm, segment length=2mm}]
\newtheorem{theorem}{Theorem}
\newtheorem{corollary}[theorem]{Corollary}
\newtheorem{proposition}[theorem]{Proposition}
\newtheorem{lemma}[theorem]{Lemma}
\newtheorem{definition}[theorem]{Definition}
\newtheorem{example}[theorem]{Example}
\newtheorem{problem}[theorem]{Problem}
\newcommand{\Plat}{{\rm Plat}}
\newcommand{\Dplat}{{\rm Dplat}}
\newcommand{\Pasc}{{\rm Pasc}}
\newcommand{\Rpd}{{\rm Rpd}}
\newcommand{\Eudd}{{\rm Eud}}
\newcommand{\Dasc}{{\rm Dasc}}
\newcommand{\Lap}{{\rm Lap}}
\newcommand{\dplat}{{\rm dplat}}
\newcommand{\pasc}{{\rm pasc}}
\newcommand{\lends}{{\rm exl}}
\newcommand{\mends}{{\rm exm}}
\newcommand{\rends}{{\rm exr}}
\newcommand{\eudd}{{\rm eud}}
\newcommand{\udd}{{\rm ud}}
\newcommand{\rpd}{{\rm rpd}}
\newcommand{\pd}{{\rm pd}}
\newcommand{\mm}{\mathcal{M}}
\newcommand{\Ddes}{{\rm Ddes\,}}
\newcommand{\Des}{{\rm Des\,}}
\newcommand{\Asc}{{\rm Asc\,}}
\newcommand{\m}{{\rm M}}
\newcommand{\dasc}{{\rm dasc\,}}
\newcommand{\plat}{{\rm plat\,}}
\newcommand{\ap}{{\rm ap\,}}
\newcommand{\lap}{{\rm lap\,}}
\newcommand{\ddes}{{\rm ddes\,}}
\newcommand{\des}{{\rm des\,}}
\newcommand{\mtn}{\mathcal{T}}
\newcommand{\mmn}{\mathcal{M}_{2n}}
\newcommand{\msn}{\mathfrak{S}_n}
\newcommand{\ms}{\mathfrak{S}}
\newcommand{\lrf}[1]{\lfloor #1\rfloor}
\newcommand{\mbn}{{\mathcal B}_n}
\newcommand{\mq}{\mathcal{Q}}
\newcommand{\mqn}{\mathcal{Q}_n}
\newcommand{\asc}{{\rm asc\,}}
\newcommand{\Stirling}[2]{\genfrac{\{}{\}}{0pt}{}{#1}{#2}}
\title{Stirling permutation codes}
\author[S.-M.~Ma]{Shi-Mei Ma}
\address{School of Mathematics and Statistics,
        Northeastern University at Qinhuangdao,
         Hebei 066000, P.R. China}
\email{shimeimapapers@163.com (S.-M. Ma)}
\author[H.~Qi]{Hao Qi}
\address{College of Mathematics and Physics, Wenzhou University, Wenzhou 325035, P.R. China}
\email{qihao@wzu.edu.cn(H.~Qi)}
\author{Jean Yeh}
\address{Department of Mathematics, National Kaohsiung Normal University, Kaohsiung 82444, Taiwan}
\email{chunchenyeh@nknu.edu.tw}
\author[Y.-N. Yeh]{Yeong-Nan Yeh}
\address{College of Mathematics and Physics, Wenzhou University, Wenzhou 325035, P.R. China}
\email{mayeh@math.sinica.edu.tw (Y.-N. Yeh)}
\subjclass[2010]{Primary 05A19; Secondary 05E05}
\begin{document}

\maketitle
\begin{abstract}
The development of the theories of the second-order Eulerian polynomials began with the works of
Buckholtz and Carlitz in their studies of an asymptotic expansion.
Gessel-Stanley introduced Stirling permutations and presented
combinatorial interpretations of the second-order Eulerian polynomials.
The Stirling permutations have been extensively studied by many researchers.
The aim of this paper is to give substantial generalizations of the second-order Eulerian polynomials.
We develop a general method for finding equidistributed statistics on Stirling permutations.
Firstly, we show that
the up-down-pair statistic is equidistributed with ascent-plateau statistic, and that the exterior up-down-pair statistic is equidistributed with
left ascent-plateau statistic.
Secondly, we introduce the Stirling permutation codes. Several equidistribution results follow from simple applications.
In particular, we find that six bivariable set-valued statistics are equidistributed on the set of Stirling permutations. As applications, we extend a classical result
independently established by Dumont and B\'ona.
Thirdly, we explore bijections among Stirling permutation codes, perfect matchings and trapezoidal words.
We then show the $e$-positivity of the enumerators of Stirling permutations
by left ascent-plateaux, exterior up-down-pairs and right plateau-descents.
In the final part, the $e$-positivity of the multivariate $k$-th order Eulerian polynomials is established, which
improves a result of Janson-Kuba-Panholzer and generalizes a recent result of Chen-Fu.
\bigskip

\noindent{\sl Keywords}: Stirling permutations; Set-valued statistics; $e$-Positivity; Symmetric functions
\end{abstract}
\date{\today}
\newpage
\tableofcontents
\section{Introduction}
\subsection{Notation and preliminaries}
\hspace*{\parindent}

The development of the theories of the second-order Eulerian polynomials began with the works of
Buckholtz~\cite{Buckholtz} and Carlitz~\cite{Carlitz65} in their studies of an asymptotic expansion.
Further developments continued with the contributions of Riordan~\cite{Riordan76}, Gessel-Stanley~\cite{Gessel78}, Dumont~\cite{Dumont80}, Park~\cite{Park1994}, B\'ona~\cite{Bona08},
Janson-Kuba-Panholzer~\cite{Janson11}, Haglund-Visontai~\cite{Haglund12} and Chen-Fu~\cite{Chen17,Chen21}.
The aim of this paper is to give substantial generalizations of these polynomials.

Put
\begin{equation}\label{eqr}
\mathrm{e}^{nx}=\sum_{r=0}^n\frac{(nx)^r}{r!}+\frac{(nx)^n}{n!}S_n(x).
\end{equation}
where $n$ is a positive integer and $x$ an arbitrary complex number.
The study of~\eqref{eqr} was initiated by Ramanujan, see~\cite{Ramanujan27}.
Buckholtz~\cite{Buckholtz} found that
$$S_n(x)=\sum_{r=0}^{k-1}\frac{1}{n^r}U_r(x)+O(n^{-k}),$$
where $$U_r(x)=(-1)^r\left(\frac{x}{1-x}\frac{\mathrm{d}}{\mathrm{d}x}\right)^r\frac{x}{1-x}=(-1)^r\frac{C_r(x)}{(1-x)^{2r+1}}.$$
Subsequently, Carlitz~\cite{Carlitz65} discovered that
$$C_n(x)=(1-x)^{2n+1}\sum_{k=0}^\infty \Stirling{n+k}{k}x^k,$$
where $\Stirling{n}{k}$ are the {\it Stirling numbers of the second kind}.
The polynomials $C_n(x)$ are now known as the {\it second-order Eulerian polynomials} and they satisfy the recurrence relation
$$C_{n+1}(x)=(2n+1)xC_n(x)+x(1-x)\frac{\mathrm{d}}{\mathrm{d}x}C_n(x),~C_0(x)=1.$$
The first few $C_n(x)$ are $$C_1(x)=x,C_2(x)=x+2x^2,C_3(x)=x+8x^2+6x^3.$$

In~\cite{Riordan76}, Riordan found that $C_n(x)$ are the enumerators of Riordan trapezoidal words of length $n$ by number of distinct numbers.
Subsequently, Gessel-Stanley~\cite{Gessel78} discovered that $C_n(x)$ are the descent polynomials of Stirling permutations in $\mqn$.
The Stirling permutations have been extensively studied by many researchers, see~\cite{Chen21,Dzhumadil14,Lin21,Liu21,Ma1902,Park1994} and references therein.

For $\mathbf{m}=(m_1,m_2,\ldots,m_n)\in \mathbb{N}^n$, let $\mathbf{n}=\{1^{m_1},2^{m_2},\ldots,n^{m_n}\}$ be a multiset,
where $i$ appears $m_i$ times. A multipermutation of $\mathbf{n}$ is a sequence of its elements.
Denote by $\ms_{\mathbf{n}}$ the set of multipermutations of $\mathbf{n}$.
We say that the multipermutation $\sigma$ of $\mathbf{n}$ is a {\it Stirling permutation} if $\sigma_s\geqslant\sigma_i$ as soon as $\sigma_i=\sigma_j$ and $i<s<j$.
Denote by $\mq_\mathbf{n}$ the set of Stirling permutations of $\mathbf{n}$.
When $m_1=\cdots=m_n=1$, the set $\mq_\mathbf{n}$ reduces to the symmetric group $\msn$,
which is the set of permutations of $[n]=\{1,2,\ldots,n\}$. When $m_1=\cdots=m_n=2$,
the set $\mq_\mathbf{n}$ reduces to $\mq_n$, which is the set of ordinary Stirling permutations of $[n]_2=\{1^2,2^2,\ldots,n^2\}$.
Except where explicitly stated, we always assume that all Stirling permutations belong to $\mqn$, and for $\sigma\in\mqn$, we set $\sigma_0=\sigma_{n+1}=0$.
For example, $$\mq_1=\{11\},~\mq_2=\{1122,1221,2211\}.$$
\begin{definition}
For $\sigma\in\ms_{\mathbf{n}}$, any entry $\sigma_{i}$  is called
\begin{itemize}
  \item [$(i)$] an {\it ascent} (resp.~{\it descent},~{\it plateau}) if $\sigma_{i}<\sigma_{i+1}$ (resp.~$\sigma_{i}>\sigma_{i+1}$,~$\sigma_{i}=\sigma_{i+1}$), where $i\in \{0,1,2,\ldots,m_1+m_2+\cdots+m_n\}$ and we set $\sigma_0=\sigma_{m_1+m_2+\cdots+m_n+1}=0$, see~\cite{Bona08,Gessel78};
  \item [$(ii)$] an {\it ascent-plateau} (resp.~{\it plateau-descent}) if $\sigma_{i-1}<\sigma_{i}=\sigma_{i+1}$ (resp.~$\sigma_{i-1}=\sigma_{i}>\sigma_{i+1}$ ), where $i\in\{2,3,\ldots,m_1+m_2+\cdots+m_n-1\}$, see~\cite{Ma15,Ma19};
  \item [$(iii)$] a {\it left ascent-plateau} if $\sigma_{i-1}<\sigma_{i}=\sigma_{i+1}$, where $i\in[m_1+m_2+\cdots+m_n-1]$ and $\sigma_0=0$, see~\cite{Ma15,Ma19};
  \item [$(iv)$] a {\it right plateau-descent} if $\sigma_{i-1}=\sigma_{i}>\sigma_{i+1}$, where $i\in\{2,3,\ldots,m_1+m_2+\cdots+m_n\}$ and we set $\sigma_{m_1+m_2+\cdots+m_n+1}=0$, see~\cite{Liu21,Ma1902}.
\end{itemize}
\end{definition}
Let $\asc(\sigma)$ (resp.~$\des(\sigma),~\plat(\sigma),~\ap(\sigma),~\pd(\sigma),~\lap(\sigma),~\rpd(\sigma)$) denotes the number of ascents (resp.~descents,~plateaux,~ascent-plateaux,~plateau-descents,~left ascent-plateaux,~right plateau-descents) of $\sigma$.
The reverse bijection $\sigma\rightarrow \sigma^r$ on $\mqn$ defined by $\sigma^r_i=\sigma_{2n+1-i}$
shows that
\begin{equation*}
\begin{split}
\sum_{\sigma\in \mqn}x^{\asc(\sigma)}&=\sum_{\sigma\in \mqn}x^{\des(\sigma)},\\
\sum_{\sigma\in \mqn}x^{\ap(\sigma)}&=\sum_{\sigma\in \mqn}x^{\pd(\sigma)},\\
\sum_{\sigma\in \mqn}x^{\lap(\sigma)}&=\sum_{\sigma\in \mqn}x^{\rpd(\sigma)}.
\end{split}
\end{equation*}

In~\cite{Bona08}, B\'ona introduced the plateau statistic $\plat$ and discovered that
$$C_n(x)=\sum_{\sigma\in\mqn}x^{\plat{(\sigma)}},$$
which leads to a remarkable equidistributed result on $\mqn$:
\begin{equation}\label{Bona}
\sum_{\sigma\in\mqn}x^{\asc{(\sigma)}}=\sum_{\sigma\in\mqn}x^{\plat{(\sigma)}}=\sum_{\sigma\in\mqn}x^{\des{(\sigma)}}.
\end{equation}
It should be noted that the plateau statistic has been considered by Dumont~\cite{Dumont80} in the name of the repetition statistic, and
it went unnoticed until it was independently studied by B\'ona.
A trivariate version of the second-order Eulerian polynomial is defined by
\begin{equation}\label{Cnxyz01}
C_n(x,y,z)=\sum_{\sigma\in\mqn}x^{\asc{(\sigma)}}y^{\des(\sigma)}z^{\plat{(\sigma)}}.
\end{equation}
Dumont~\cite[p.~317]{Dumont80} found that
\begin{equation}\label{Dumont80}
C_{n+1}(x,y,z)=xyz\left(\frac{\partial}{\partial x}+\frac{\partial}{\partial y}+\frac{\partial}{\partial z}\right)C_n(x,y,z),~C_1(x,y,z)=xyz.
\end{equation}
which implies that $C_n(x,y,z)$ is symmetric in the variables $x,y$ and $z$, and so~\eqref{Bona} holds.
The symmetry of $C_n(x,y,z)$ was rediscovered by Janson~\cite[Theorem~2.1]{Janson08} by constructing an urn model.
In~\cite{Haglund12}, Haglund and Visontai introduced a refinement of the polynomial $C_n(x,y,z)$ by indexing each ascent,
descent and plateau by the values where they appear.
Recently, using the theory of context-free grammars, Chen and Fu~\cite{Chen21}
found that $C_n(x,y,z)$ is $e$-positive, i.e.,
\begin{equation}\label{Cnxyz}
C_n(x,y,z)=\sum_{i+2j+3k=2n+1}\gamma_{n,i,j,k}(x+y+z)^{i}(xy+yz+zx)^{j}(xyz)^k,
\end{equation}
where the coefficient $\gamma_{n,i,j,k}$ equals the number of 0-1-2-3 increasing plane trees
on $[n]$ with $k$ leaves, $j$ degree one vertices and $i$ degree two vertices.

A {\it rooted tree} of order $n$ with the vertices labelled $1,2,\ldots,n$, is an increasing tree if the
node labelled $1$ is distinguished as the root, and the labels along
any path from the root are increasing.
An {\it increasing plane tree}, usually called {\it plane recursive tree},
is an increasing tree with the children of each vertex are linearly ordered (from left to right, say).
A {\it 0-1-2-$\cdots$-k increasing plane tree} on $[n]$ is an increasing plane tree with each
vertex with at most $k$ children. The {\it degree} of a vertex in a rooted tree is meant to be
the number of its children (sometimes called outdegree).
The {\it depth-first walk} of a
rooted plane tree starts at the root, goes first to the leftmost child of the root, explores that
branch (recursively, using the same rules), returns to the root, and continues with the next
child of the root, until there are no more children left.

The following definition will be used repeatedly.
\begin{definition}[{\cite{Dumont80}}]
A {\it ternary increasing tree} of size $n$ is an increasing plane tree with $3n+1$ nodes in which each
interior node has label and three children (a left child,
a middle child and a right child), and exterior nodes have no children and no labels.
\end{definition}
Let $\mtn_n$ denote the set of ternary increasing trees of size $n$, see Figure~\ref{Fig01} for instance. For any $T\in\mtn_n$,
it is clear that $T$ has exactly $2n+1$ exterior nodes.
Let $\lends(T)$ (resp.~$\mends(T)$,~$\rends(T)$) denotes the number of exterior left nodes (resp.~exterior middle nodes, exterior right nodes) in $T$.
Using a recurrence relation that is equivalent to~\eqref{Dumont80}, Dumont~\cite[Proposition~1]{Dumont80} found that
\begin{equation}\label{QnTn}
C_n(x,y,z)=\sum_{\sigma\in\mqn}x^{\asc(\sigma)}y^{\plat(\sigma)}z^{\des(\sigma)}=\sum_{T\in\mtn_n}x^{\lends(T)}y^{\mends(T)}z^{\rends(T)}.
\end{equation}
\subsection{Motivation and the organization of the paper}
\hspace*{\parindent}

\begin{figure}[tbp]
\begin{center}
\begin{tikzpicture} 
\node [circle,draw] {1}
    child {node [circle,draw] {2}
    		child { edge from parent }
    		child { edge from parent node [left] {2} node [right] {2}}
    		child { edge from parent }
    }
    child { edge from parent node [left] {1} node [right] {1} }
    child { edge from parent };
\end{tikzpicture}\quad 
\begin{tikzpicture}
\node [circle,draw] {1}
    child { edge from parent }
    child { node [circle,draw] {2}     		
    		child { edge from parent }
    		child { edge from parent node [left] {2} node [right] {2}}
    		child { edge from parent }
    		edge from parent node[left] {1} node[right] {1}
    }
    child { edge from parent };
\end{tikzpicture}\quad %
\begin{tikzpicture}
\node [circle,draw] {1}
    child { edge from parent }
    child { edge from parent node[left] {1} node[right] {1}}
    child { node [circle,draw] {2}     		
    		child { edge from parent }
    		child { edge from parent node [left] {2} node [right] {2}}
    		child { edge from parent }
    		edge from parent
    };
\end{tikzpicture}
\end{center}
\caption{The ternary increasing trees of order 2 encoded by $2211,1221,1122$,
and their $\operatorname{SP}$-codes are given by $((0,0),(1,1)),((0,0)(1,2))$ and $((0,0)(1,3))$, respectively .}
\label{Fig01}
\end{figure}
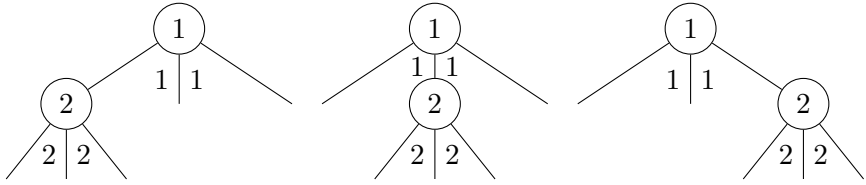
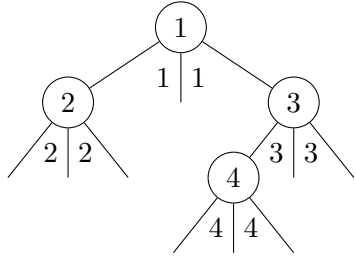
\begin{figure}[tbp]
\begin{center}
\begin{tikzpicture}
\node [circle,draw] {1}
    child { node [circle,draw] {2}     		
    		child { edge from parent}
    		child { edge from parent node [left] {2} node [right] {2}}
    		child { edge from parent}
    }
    child { edge from parent node[left] {1} node[right] {1}}
    child { node [circle,draw] {3}     		
 child { node [circle,draw] {4}     		
    		child { edge from parent}
    		child { edge from parent node [left] {4} node [right] {4}}
    		child { edge from parent }
    }
    		child { edge from parent node [left] {3} node [right] {3}}
child { edge from parent }
    		edge from parent
    };
\end{tikzpicture}
\end{center}
\caption{An order 3 ternary increasing tree encoded by $22114433$, and its $\operatorname{SP}$-code is $((0,0),(1,1),(1,3),(3,1))$.}
\label{Fig02}
\end{figure}

In~\cite{Janson08}, Janson gave a bijection between plane recursive trees and Stirling permutations,
which has previously been used by Koganov~\cite{Koganov96}. Subsequently, Janson-Kuba-Panholzer~\cite[Section~3]{Janson11} gave
a detailed proof of the bijection between $(k+1)$-ary increasing trees and $k$-Stirling permutations, which was independently introduced by Gessel~\cite[p46]{Park1994}.
Following the proof of~\cite[Theorem 1]{Janson11}, let $\phi$ be the bijection between ternary increasing trees and Stirling permutations that is defined as follows:
\begin{enumerate}
\item [$(i)$] Given $T\in\mtn_n$. Between the $3$ edges of $T$ going out from a node labelled $v$, we place $2$ integers $v$.
Now we perform the depth-first walk and code $T$ by the sequence of the labels visited as we go around $T$.
Let $\phi(T)$ be the code. In particular, the ternary increasing tree of order $1$ is encoded by $11$.
A ternary increasing tree of order $n$ is encoded by a string of $2n$ integers, where each of the labels $1,2,\ldots,n$ appears exactly $2$ times.
It is clear that for each $i\in[n]$, the elements occurring between the two
occurrences of $i$ are larger than $i$, since we can only visit nodes with higher labels. Hence the
code $\phi(T)$ is a Stirling permutation, see Figures~\ref{Fig01} and~\ref{Fig02} for illustrations;
\item [$(ii)$] The inverse of $\phi$ can be described as follows. Given $\sigma\in\mqn$. We proceed recursively
starting at step one by decomposing $\sigma$ as $u_11u_21u_3$, where the $u_i$'s are again Stirling permutations.
The smallest label in each $u_i$ is attached to the root node labelled $1$. One can recursively apply this procedure to each $u_i$ to obtain the tree representation, and
$\phi^{-1}(\sigma)$ is a ternary increasing tree.
 \end{enumerate}

Motivated by the work of B\'ona~\cite{Bona08}, Dumont~\cite{Chow08}, Haglund-Visontai~\cite{Haglund12} and Janson-Kuba-Panholzer~\cite[Section~3]{Janson11},
this paper is devoted to the following problem.
\begin{problem}\label{Problem01}
Give some applications of the bijection $\phi$ and develop a general method for finding equidistributed statistics on Stirling permutations.
\end{problem}

In Section~\ref{section2}, we introduce the up-down-pair statistic $\udd$ and the exterior up-down-pair statistic $\eudd$ on Stirling permutations, and we show that
$\udd$ is equidistributed with $\ap$ and $\eudd$ is equidistributed with $\lap$. Therefore, we get
$$\sum_{\sigma\in \mqn}x^{\ap(\sigma)}=\sum_{\sigma\in \mqn}x^{\udd(\sigma)}=\sum_{\sigma\in \mqn}x^{\pd(\sigma)},$$
\begin{equation}\label{laprpd}
\sum_{\sigma\in \mqn}x^{\lap(\sigma)}=\sum_{\sigma\in \mqn}x^{\eudd(\sigma)}=\sum_{\sigma\in \mqn}x^{\rpd(\sigma)}.
\end{equation}
In Section~\ref{section4}, by introducing the Stirling permutation code, we present various results concerning Problem~\ref{Problem01}.
In particular, in Theorems~\ref{thm13} and~\ref{thm14}, we present several bivariate generalizations of~\eqref{Bona}.
The last two identities in  Theorem~\ref{thm14} give the generalizations of~\eqref{Bona} and~\eqref{laprpd} simultaneously.
In Section~\ref{section402}, we establish bijections among $\operatorname{SP}$-codes, trapezoidal words and perfect matchings.
In Section~\ref{section5}, we show the $e$-positivity of the enumerators of Stirling permutations by $(\lap,\eudd,\rpd)$.
In Section~\ref{section6}, we show the $e$-positivity of the multivariate $k$-th order Eulerian polynomials,
which generalizes~\eqref{Cnxyz} and improves a classical result of Janson-Kuba-Panholzer~\cite{Janson11}.
\section{The ascent-plateau and up-down-pair statistics}\label{section2}
The number of elements in a set $C$ is called the cardinality of $C$, written $\#C$.
The {\it type $A$ Eulerian polynomials} $A_n(x)$~\cite{Hwang20}, the {\it type $B$ Eulerian polynomials} $B_n(x)$~\cite{Bre94}, the {\it ascent-plateau polynomials} (they aso also known as $1/2$-Eulerian polynomials) $M_n(x)$~\cite{Ma15,Savage1202}
and the {\it left ascent-plateau polynomials} $N_n(x)$~\cite{Ma15} can be respectively defined as follows:
\begin{equation*}
\begin{split}
A_n(x)&=\sum_{\pi\in\msn}x^{\des(\pi)},~B_n(x)=\sum_{\pi\in\mbn}x^{\operatorname{des}_B(\pi)},\\
M_n(x)&=\sum_{\sigma\in\mqn}x^{\ap(\sigma)},~
N_n(x)=\sum_{\sigma\in\mqn}x^{\lap(\sigma)},
\end{split}
\end{equation*}
where $\mbn$ denotes the the hyperoctahedral group of rank $n$,
$$\operatorname{des}_B(\pi)=\#\{i\in\{0,1,2,\ldots,n-1\}\mid\pi(i)>\pi({i+1})\},~{\text{and we set $\pi(0)=0$}}.$$
These polynomials share several similar properties,
including recursions~\cite{Gessel78,Ma15}, real-rootedness~\cite{Bona08,Haglund12}, combinatorial expansions~\cite{Chen21,Chow08,Lin21,Ma1902,Zhuang17} and asymptotic distributions~\cite{Hwang20}.
For convenience, we collect the recurrence relations of these polynomials:
\begin{equation*}
\begin{split}
A_{n+1}(x)&=(n+1)xA_{n}(x)+x(1-x)\frac{\mathrm{d}}{\mathrm{d}x}A_{n}(x),\\
B_{n+1}(x)&=(2nx+1+x)B_{n}(x)+2x(1-x)\frac{\mathrm{d}}{\mathrm{d}x}B_{n}(x),\\
M_{n+1}(x)&=(2nx+1)M_n(x)+2x(1-x)\frac{\mathrm{d}}{\mathrm{d}x}M_n(x),\\
N_{n+1}(x)&=(2n+1)xN_n(x)+2x(1-x)\frac{\mathrm{d}}{\mathrm{d}x}N_n(x),
\end{split}
\end{equation*}
with $A_0(x)=B_0(x)=C_0(x)=M_0(x)=N_0(x)=1$.
There are close connections among these polynomials (see~\cite{MaYeh17} for details):
\begin{equation}\label{Convo01}
\begin{split}
2^nA_n(x)&=\sum_{i=0}^n\binom{n}{i}N_i(x)N_{n-i}(x),\\
B_n(x)&=\sum_{i=0}^n\binom{n}{i}M_i(x)N_{n-i}(x).
\end{split}
\end{equation}

Let $\mqn^{(1)}$ be the set of Stirling permutations of the multiset $\{1,2^2,3^2,\ldots,n^2,(n+1)^2\}$, i.e.,
this multset has exactly one $1$ and two copies of $i$ for all $2\leqslant i\leqslant n+1$.
In particular, $$\mq_2^{(1)}=\{12233,12332,13322,33122,22133,22331,23321,33221\}.$$
By considering the position of the entry $1$ in a Stirling permutation $\sigma\in\mqn^{(1)}$, the following result immediately follows from~\eqref{Convo01}.
\begin{proposition}\label{thm01}
For $n\geqslant 1$, we have
$$2^nA_n(x)=\sum_{\sigma\in\mqn^{(1)}}x^{\lap(\sigma)},$$
$$B_n(x)=\sum_{\sigma\in\mqn^{(1)}}x^{\ap(\sigma)}.$$
\end{proposition}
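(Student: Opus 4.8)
The plan is to use the unique copy of the smallest letter $1$ as a ``cut point'': it splits an element of $\mqn^{(1)}$ into a pair of ordinary Stirling permutations, and once I show how $\ap$ and $\lap$ behave under this splitting, both identities will follow from the convolution formulas~\eqref{Convo01} together with $M_i(x)=\sum_{\alpha\in\mq_i}x^{\ap(\alpha)}$ and $N_j(x)=\sum_{\beta\in\mq_j}x^{\lap(\beta)}$.

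First I would establish the structural decomposition. Fix $\sigma\in\mqn^{(1)}$, let $p$ be the position of its unique entry $1$, and write $\sigma=\tau\,1\,\rho$ with $\tau=\sigma_1\cdots\sigma_{p-1}$ and $\rho=\sigma_{p+1}\cdots\sigma_{2n+1}$. If some value $v\geqslant 2$ had one copy in $\tau$ and the other in $\rho$, the Stirling condition applied to $\sigma$ would force the entry $1$ lying between the two copies of $v$ to satisfy $1\geqslant v$, which is impossible. Hence every value in $\{2,\ldots,n+1\}$ lies entirely in $\tau$ or entirely in $\rho$; if $S\subseteq\{2,\ldots,n+1\}$ is the set of values occurring in $\tau$ and $i=\#S$, then $\tau$ is a Stirling permutation on $\{v^2:v\in S\}$ and $\rho$ a Stirling permutation on $\{v^2:v\notin S\}$. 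Standardising, $\tau$ corresponds to a unique $\alpha\in\mq_i$ and $\rho$ to a unique $\beta\in\mq_{n-i}$, and conversely every triple $(S,\alpha,\beta)$ with $\#S=i$ arises exactly once; this produces the factor $\binom{n}{i}$.

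Next I would track the statistics, the claim being
\[
\ap(\sigma)=\ap(\alpha)+\lap(\beta),\qquad \lap(\sigma)=\lap(\alpha)+\lap(\beta).
\]
One examines the index $j$ in the definition of $\ap$ (resp.\ $\lap$) by cases: $j$ strictly inside $\tau$; the three border positions $p-1,p,p+1$; and $j$ strictly inside $\rho$. The positions strictly inside $\tau$ reproduce $\ap(\alpha)$ exactly (and in the $\lap$-count the additional position $j=1$ contributes $[\alpha_1=\alpha_2]$, upgrading this block to $\lap(\alpha)$). At $j=p-1$ and $j=p$ the entry $1$ destroys the plateau condition, contributing nothing. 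At $j=p+1$ one gets an ascent–plateau of $\sigma$ exactly when $\rho$ opens with a doubled letter, i.e.\ when $\beta$ has a left ascent–plateau at position $1$; and the positions strictly inside $\rho$ reproduce the ascent–plateaux of $\beta$ at positions $\geqslant 2$. Since $\lap(\beta)$ exceeds the number of ascent–plateaux of $\beta$ precisely by the indicator of a left ascent–plateau at position $1$, the last two blocks together always contribute $\lap(\beta)$. I would also check the degenerate cases $p=1$ (empty $\tau$) and $p=2n+1$ (empty $\rho$), which agree with the formulas via $\ap(\emptyset)=\lap(\emptyset)=0$.

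Summing over $\sigma$ then gives
\[
\sum_{\sigma\in\mqn^{(1)}}x^{\ap(\sigma)}=\sum_{i=0}^n\binom{n}{i}M_i(x)N_{n-i}(x),\qquad
\sum_{\sigma\in\mqn^{(1)}}x^{\lap(\sigma)}=\sum_{i=0}^n\binom{n}{i}N_i(x)N_{n-i}(x),
\]
and the right-hand sides are $B_n(x)$ and $2^nA_n(x)$ by~\eqref{Convo01}. The one point needing care is the bookkeeping at the three border positions flanking the entry $1$, in particular the asymmetry that a doubled letter opening $\tau$ is \emph{not} counted by $\ap(\sigma)$ whereas a doubled letter opening $\rho$ \emph{is}; this asymmetry is exactly what converts one $N$-factor into an $M$-factor in the $\ap$-identity while leaving the $\lap$-identity a pure $N$-convolution.
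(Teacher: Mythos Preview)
Your proposal is correct and follows exactly the route the paper indicates: the paper's entire proof is the single remark ``by considering the position of the entry $1$ in a Stirling permutation $\sigma\in\mqn^{(1)}$, the following result immediately follows from~\eqref{Convo01}'', and what you have written is a careful unpacking of precisely that sentence. Your tracking of the border positions (in particular the asymmetry that forces one factor to be $M_i$ rather than $N_i$ in the $\ap$-identity) is the key detail the paper leaves implicit.
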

\begin{definition}\label{def-uodown}
Let $\sigma\in\mqn$. An entry $\sigma_i$ is called an up-down-pair entry if $\sigma_{i-1}<\sigma_i=\sigma_j>\sigma_{j+1}$, where $i<j$.
The two equal entries $\sigma_i$ and $\sigma_j$ may appear arbitrarily far apart.
The up-down-pair statistic $\udd$ and the exterior up-down-pair statistic $\eudd$ are respectively defined as follows:
\begin{equation*}
\begin{split}
\udd(\sigma)&=\#\{i\in [2n-2]:~\text{$\sigma_i$ is an up-down-pair entry, and we set $\sigma_0=0$}\},\\
\eudd(\sigma)&=\#\{i\in [2n-1]:~\text{$\sigma_i$ is an up-down-pair entry, and we set $\sigma_0=\sigma_{2n+1}=0$}\}.
\end{split}
\end{equation*}
\end{definition}

\begin{example}
We have
\begin{equation*}
\begin{split}
\udd(123321)&=\udd(0123321)=2,\udd(331221)=\udd(0331221)=2,\\
\eudd(123321)&=\eudd(01233210)=3,\eudd(331221)=\eudd(03312210)=2.
\end{split}
\end{equation*}
\end{example}
The main result of this section is given as follows.
\begin{theorem}
We have
\begin{equation}\label{apud01}
\begin{split}
\sum_{\sigma\in\mqn}x^{\ap(\sigma)}&=\sum_{\sigma\in\mqn}x^{\udd(\sigma)},\\
\sum_{\sigma\in\mqn}x^{\lap(\sigma)}&=\sum_{\sigma\in\mqn}x^{\eudd(\sigma)}.
\end{split}
\end{equation}
\end{theorem}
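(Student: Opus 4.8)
The plan is to sidestep the bijection $\phi$ and instead compute the generating polynomials $P_n(x):=\sum_{\sigma\in\mqn}x^{\udd(\sigma)}$ and $Q_n(x):=\sum_{\sigma\in\mqn}x^{\eudd(\sigma)}$ by a recurrence. Since $\sum_{\sigma\in\mqn}x^{\ap(\sigma)}=M_n(x)$ and $\sum_{\sigma\in\mqn}x^{\lap(\sigma)}=N_n(x)$, and $M_n,N_n$ are determined by the recurrences recalled earlier in this section together with $M_0=N_0=1$, it suffices to prove $P_0=Q_0=1$ together with
\[
P_{n+1}(x)=(2nx+1)P_n(x)+2x(1-x)P_n'(x),\qquad Q_{n+1}(x)=(2n+1)xQ_n(x)+2x(1-x)Q_n'(x);
\]
then $P_n=M_n$ and $Q_n=N_n$ follow by induction, which is exactly \eqref{apud01}.

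To extract the recurrences I would use the insertion bijection. Since $n+1$ is the largest letter, its two copies in any member of $\mq_{n+1}$ must be consecutive, so deleting that pair identifies $\mq_{n+1}$ with the set of pairs $(\sigma,g)$, where $\sigma\in\mqn$ and $g$ is one of the $2n+1$ gaps of $\sigma$; write $\sigma\ast g$ for the permutation obtained from $\sigma$ by inserting the block $(n+1)(n+1)$ into $g$. The argument is then purely local, resting on two observations about how $\udd$ changes under $\sigma\mapsto\sigma\ast g$. \emph{Creation:} the inserted block $(n+1)(n+1)$ is an up-down-pair entry of $\sigma\ast g$ unless $g$ is the last gap of $\sigma$, in which case its second copy is the final letter and the convention in Definition~\ref{def-uodown} that $\sigma_{2n+1}$ is not set to $0$ for $\udd$ excludes it. \emph{Destruction:} inserting into $g$ can never create an up-down-pair entry among the old letters---inserting the maximal letters only replaces a neighbour of an old value by $n+1$, which can destroy but never establish the inequalities defining an up-down-pair entry---and it destroys an old up-down-pair entry with value $v$ exactly when $g$ is the gap immediately before the first copy of $v$ (call it $b_v$) or immediately after the second copy of $v$ (call it $a_v$).

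The crux, and the step I expect to be the only real obstacle, is to show that if $\udd(\sigma)=k$ then the $2k$ gaps $b_v,a_v$, with $v$ running over the up-down-pair entries of $\sigma$, are pairwise distinct and none of them is the last gap. Distinctness of the $b_v$ among themselves and of the $a_v$ among themselves is immediate; the content is that $b_v=a_{v'}$ is impossible, since it would put the second copy of $v'$ immediately before the first copy of $v$ and thereby force both $v<v'$ and $v'<v$; and $a_v$ is never the last gap because---again by the convention for $\udd$---the second copy of an up-down-pair entry is never the final letter. Granting this, the $2n+1$ insertions split as: the $2k$ gaps $b_v,a_v$ (each destroys exactly one old up-down-pair entry and creates the new one, so $\udd$ is unchanged), the last gap (nothing created or destroyed, $\udd$ unchanged), and the remaining $2n-2k$ gaps (the new up-down-pair entry is created and nothing destroyed, raising $\udd$ by $1$). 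Hence $\sum_g x^{\udd(\sigma\ast g)}=(2k+1)x^{k}+(2n-2k)x^{k+1}$, and summing over $\sigma\in\mqn$ gives precisely $(2nx+1)P_n(x)+2x(1-x)P_n'(x)$. The computation for $Q_n$ is identical except that \emph{Creation} now fires at every gap (the convention $\sigma_{2n+1}=0$ makes the boundary case work), so one gets $2k$ unchanged gaps and $2n+1-2k$ increasing gaps, yielding $(2n+1)xQ_n(x)+2x(1-x)Q_n'(x)$.

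For completeness, there is also a proof through $\phi$: one checks that for $T\in\mtn_n$ the statistic $\eudd(\phi(T))$ (resp.~$\lap(\phi(T))$) counts the internal nodes of $T$ whose left and right (resp.~left and middle) children are both exterior, so the involution on $\mtn_n$ that swaps the middle and right child of every node settles the second identity of \eqref{apud01} in one line. Translating $\ap$ and $\udd$ this way, however, introduces boundary corrections along the leftmost and rightmost branches of $T$, so for a uniform treatment I would run the recurrence argument above.
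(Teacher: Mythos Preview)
Your proposal is correct and is essentially the paper's own proof: the paper also derives recurrences for $\#\{\sigma\in\mqn:\udd(\sigma)=i\}$ and $\#\{\sigma\in\mqn:\eudd(\sigma)=i\}$ by counting, for a given $\sigma\in\mqn$, how many of the $2n+1$ insertion slots for the block $(n{+}1)(n{+}1)$ preserve versus raise the statistic, and then identifies these recurrences with those of $M_{n,i}$ and $N_{n,i}$. Your write-up is in fact more careful than the paper's---you explicitly verify that the destructive gaps $b_v,a_v$ are pairwise distinct and (for $\udd$) avoid the last gap, and that insertion can only destroy and never create an up-down-pair among the old letters---points the paper leaves implicit. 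Your closing remark about the tree involution swapping middle and right children is exactly the mechanism behind the paper's later $\operatorname{SP}$-code argument (Theorem~\ref{thm14}) for the set-valued equidistribution $(\operatorname{Lap},\operatorname{Eud})$.
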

\begin{proof}
Let $$M_n(x)=\sum_{\sigma\in\mqn}x^{\ap(\sigma)}=\sum_{i=0}^{n-1}M_{n,i}x^i,$$
$$N_n(x)=\sum_{\sigma\in\mqn}x^{\lap(\sigma)}=\sum_{i=1}^nN_{n,i}x^i.$$
Then the numbers $M_{n,i}$ and $N_{n,i}$ respectively satisfy the recurrence relations
\begin{equation}
\begin{split}
M_{n+1,i}&=(2i+1)M_{n,i}+(2n-2i+2)M_{n,i-1},\\
N_{n+1,i}&=2iN_{n,i}+(2n-2i+3)N_{n,i-1},
\end{split}
\end{equation}
with $M_{0,0}=N_{0,0}=1$ and $M_{0,i}=N_{0,i}=0$ if $i>0$, see~\cite{Ma15}.

Let $m_{n,i}=\#\{\sigma\in\mqn: \udd(\sigma)=i\}$.
It is clear that $m_{1,0}=M_{1,0}=1$, since $\udd(11)=\udd(011)=0$.
There are two ways to obtain an element $\sigma'\in\mq_{n+1}$ with $\udd(\sigma')=i$ from an element $\sigma\in\mqn$
by inserting two copies of $n$ into consecutive positions:
\begin{enumerate}
\item [($c_1$)] If $\udd(\sigma)=i$, then we can insert the two copies of $n$ before an up-down-pair entry or right after the second appearance of it.
 Moreover, we can insert the two copies of $n$ at the end of $\sigma$. This accounts for $(2i+1)m_{n,i}$ possibilities;
\item [($c_2$)] If $\udd(\sigma)=i-1$, then we insert the two consecutive copies of $n$ into one of the remaining $2n+1-(2(i-1)+1)=2n-2i+2$ positions.
This accounts for $(2n-2i+2)m_{n,i-1}$ possibilities.
 \end{enumerate}
Thus the numbers $m_{n,i}$ satisfy the same recursion and initial conditions as $M_{n,i}$, so they agree.

Define $n_{n,i}=\#\{\sigma\in\mqn: \eudd(\sigma)=i\}$.
Clearly, $n_{1,1}=N_{1,1}=1$, since $\eudd(0110)=1$.
Similarly,
there are two ways to obtain an element $\sigma'\in\mq_{n+1}$ with $\eudd(\sigma')=i$ from an element $\sigma\in\mqn$
by inserting two copies of $n$ into consecutive positions:
\begin{enumerate}
\item [($c_1$)] If $\eudd(\sigma)=i$, then we can insert the two copies of $n$ before an up-down-pair entry or right after the second appearance of it.
This accounts for $2in_{n,i}$ possibilities;
\item [($c_2$)] If $\eudd(\sigma)=i-1$, then we insert the two consecutive copies of $n$ into one of the remaining $2n+1-2(i-1)=2n-2i+3$ positions.
This accounts for $(2n-2i+3)n_{n,i-1}$ possibilities.
 \end{enumerate}
Thus the numbers $n_{n,i}$ satisfy the same recursion and initial conditions as $N_{n,i}$, so they agree.
\end{proof}
\section{Problem~\ref{Problem01} and the Stirling permutation code ($\operatorname{SP}$-code for short)}\label{section4}
Recall that a sequence $(e_1,e_2,\ldots,e_n)$ is an {\it inversion sequence} if $0\leqslant e_i<i$ for all $i\in [n]$.
It is well known that inversion sequences of length $n$ are in bijection with permutations in $\msn$.
As a dual of inversion sequence, by using the bijection $\phi$, we shall introduce a common code for
ternary increasing trees and Stirling permutations.

Recall that for any ternary increasing tree $T\in\mtn_n$, each interior node has label and three children (one at the left, a
middle child and a right child), and exterior nodes have no children and no labels.
For convenience, we introduce the following definition.
\begin{definition}
A simplified ternary increasing tree is a ternary increasing tree with no exterior nodes.
The {\it degree} of a vertex in a ternary increasing tree is meant to be the number of its children in the simplified ternary increasing tree.
\end{definition}
In fact, a simplified ternary increasing tree is the same as the ordinary ternary increasing tree, it is only a simplified version.
A node in a simplified ternary increasing tree with no children is called a leaf, and any interior node has at most three children (left child,
middle child or right child).
For example, Figure~\ref{Fig0003} gives the set of simplified ternary increasing trees of order $2$.
In the following discussion, a ternary increasing tree is always meant to be a simplified ternary increasing tree.
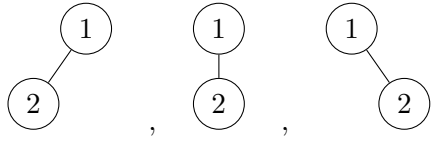
\begin{figure}
\begin{center}
\begin{tikzpicture}
[level 1/.style = {sibling distance = .7cm},
NONE/.style={edge from parent/.style={draw=none}}]
\node [circle,draw] {1}
    child {node [circle,draw] {2}}
    child [NONE] {}
    child [NONE] {};
\end{tikzpicture}
,
\begin{tikzpicture}
[level 1/.style = {sibling distance = .7cm},
NONE/.style={edge from parent/.style={draw=none}}]
\node [circle,draw] {1}
    child [NONE] {}
    child {node [circle,draw] {2}}
    child [NONE] {};
\end{tikzpicture}
,
\begin{tikzpicture}
[level 1/.style = {sibling distance = .7cm},
NONE/.style={edge from parent/.style={draw=none}}]
\node [circle,draw] {1}
    child [NONE] {}
    child [NONE] {}
    child {node [circle,draw] {2}};
\end{tikzpicture}
\caption{The simplified ternary increasing trees of order $2$.}
\label{Fig0003}
\end{center}
\end{figure}
A ternary increasing tree of size $n$ can be built up from the root $1$ by successively adding nodes $2,3,\ldots,n$.
Clearly, node $2$ is a child of the root $1$ and the root $1$ has at most three children (a left, a middle or a right child), see Figure~\ref{Fig0003} for illustartions. For $2\leqslant i\leqslant n$,
when node $i$ is inserted, we distinguish three cases:
\begin{enumerate}
\item [$(c_1)$] if it is the left child of a node $v\in [i-1]$, then the node $i$ is coded as $[v,1]$;
\item [$(c_2)$] if it is the middle child of a node $v\in [i-1]$, then the node $i$ is coded as $[v,2]$;
\item [$(c_3)$] if it is the right child of a node $v\in [i-1]$, then the node $i$ is coded as $[v,3]$.
 \end{enumerate}
Thus the node $i$ is coded as a $2$-tuple $(a_{i-1},b_{i-1})$, where $1\leqslant a_{i-1}\leqslant i-1$, $1\leqslant b_{i-1}\leqslant 3$ and $(a_i,b_i)\neq(a_j,b_j)$ for all $1\leqslant i<j\leqslant n-1$.
By convention, the root $1$ is coded as $(0,0)$.
Therefore, a ternary increasing tree of size $n$
corresponds naturally to a {\it build-tree code} $((0,0),(a_1,b_1),\ldots,(a_{n-1},b_{n-1}))$.
Using the bijection $\phi$ between
ternary increasing trees and Stirling permutations, one can see that
the build-tree code is the same as the {\it Stirling permutation code}, which is defined as follows.

\begin{definition}\label{def-BSP}
A $2$-tuples sequence $C_n=((0,0),(a_1,b_1),(a_2,b_2)\ldots,(a_{n-1},b_{n-1}))$ of length $n$ is a Stirling permutation code ($\operatorname{SP}$-code for short)
if $1\leqslant a_i\leqslant i$, $1\leqslant b_i\leqslant 3$ and $(a_i,b_i)\neq(a_j,b_j)$ for all $1\leqslant i<j\leqslant n-1$.
\end{definition}
Let $\operatorname{CQ}_n$ be the set of $\operatorname{SP}$-codes of length $n$.
In particular, $\operatorname{CQ}_1=\{(0,0)\}$ and $\operatorname{CQ}_2=\{(0,0)(1,1),~(0,0)(1,2),~(0,0)(1,3)\}$, see Figure~\ref{Fig01}.

\begin{theorem}\label{thmCQ}
The set $\operatorname{CQ}_n$ is in a natural bijection with the set $\mqn$, i.e., $\operatorname{CQ}_n\cong \mqn$.
\end{theorem}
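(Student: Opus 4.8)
The plan is to exhibit the bijection explicitly by using the map $\phi$ from the excerpt, so that the composite is a chain of bijections $\operatorname{CQ}_n \to \mtn_n \to \mqn$, and then to verify that each arrow is a bijection by an inductive counting argument. First I would observe that the passage preceding the statement has already done most of the work: a simplified ternary increasing tree of size $n$ is built up from the root $1$ by inserting nodes $2,3,\ldots,n$ one at a time, and the insertion of node $i$ is recorded faithfully by the $2$-tuple $(a_{i-1},b_{i-1})$, where $a_{i-1}\in[i-1]$ names the parent and $b_{i-1}\in\{1,2,3\}$ names whether $i$ becomes the left, middle, or right child. The constraint $(a_i,b_i)\neq(a_j,b_j)$ for $i\neq j$ is exactly the statement that no node acquires two children in the same slot. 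Thus the map $T\mapsto \operatorname{code}(T)$ sending a ternary increasing tree to its build-tree code is well-defined into $\operatorname{CQ}_n$.

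Next I would argue this map is a bijection by constructing its inverse and checking both composites are the identity. Given an $\operatorname{SP}$-code $((0,0),(a_1,b_1),\ldots,(a_{n-1},b_{n-1}))$, we read it left to right: start with the single root $1$; having placed nodes $1,\ldots,i$, attach node $i+1$ as the child of $a_i$ in slot $b_i$. The code conditions $1\leqslant a_i\leqslant i$ and the distinctness condition guarantee this instruction is always executable and produces a valid ternary increasing tree (the "increasing" property holds automatically because node $i+1$ is attached to a node with a smaller label, and the plane structure is well-defined because slots are never reused). The two composites are identities essentially by unwinding the definitions, since both the forward code and the reconstruction process scan the nodes in the order $2,3,\ldots,n$. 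Combining this with the bijection $\phi\colon \mtn_n\to\mqn$ recalled in the excerpt (whose inverse is the recursive decomposition $\sigma = u_1 1 u_2 1 u_3$), we obtain the desired bijection $\operatorname{CQ}_n\cong\mqn$; the figures already illustrate that $\phi^{-1}(2211)$, $\phi^{-1}(1221)$, $\phi^{-1}(1122)$ correspond to the codes $((0,0),(1,1))$, $((0,0),(1,2))$, $((0,0),(1,3))$.

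As an alternative, or as a sanity check, I would also give the purely enumerative version: let $c_n = \#\operatorname{CQ}_n$. Directly from Definition~\ref{def-BSP}, passing from a code of length $n$ to one of length $n+1$ amounts to appending one new pair $(a_n,b_n)$ with $a_n\in[n]$ and $b_n\in\{1,2,3\}$ subject only to avoiding the $n-1$ already-used pairs, so $c_{n+1} = (3n-(n-1))\,c_n = (2n+1)c_n$, with $c_1=1$; hence $c_n = (2n-1)!! = 1\cdot 3\cdots(2n-1)$. This matches $\#\mqn = (2n-1)!!$, which follows from the standard fact that a Stirling permutation of $\mq_{n-1}$ (or equivalently of $\mathbf{n}$ with all multiplicities $2$) gives rise to $2n-1$ members of $\mq_n$ by inserting the pair $nn$ into any of the $2n-1$ "gaps." This double-counting confirms the cardinalities agree, and the explicit bijection above upgrades this to the natural correspondence claimed.

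I do not expect a serious obstacle here; the statement is essentially a repackaging of the construction already laid out before it. The only point requiring a little care is checking that the reconstruction map lands in $\mtn_n$ and is genuinely inverse to the coding map — in particular that the plane (left/middle/right) structure is recovered correctly and that the induction on $i$ is set up so that "the code of a subtree" behaves compatibly; this is routine but should be spelled out rather than waved through. A secondary subtlety worth a sentence is the indexing convention $(a_i,b_i)$ versus the node label $i+1$ it encodes (there is an off-by-one shift between "the $i$-th pair" and "node $i$"), which the excerpt's phrasing already finesses and which the proof should state unambiguously.
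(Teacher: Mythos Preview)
Your argument is correct and produces the same bijection as the paper, but you route it through ternary increasing trees whereas the paper's proof describes the map $\Gamma:\mqn\to\operatorname{CQ}_n$ directly on Stirling permutations. The paper observes that every $\sigma'\in\mqn$ arises from some $\sigma\in\mq_{n-1}$ by inserting the block $nn$ into a gap $\sigma_i\sigma_{i+1}$, and then sets the last $2$-tuple to $(\sigma_{i+1},1)$, $(\sigma_{i+1},2)$, or $(\sigma_i,3)$ according to whether that gap is an ascent, a plateau, or a descent; iterating from $\Gamma(11)=(0,0)$ gives the code. Your composite $\operatorname{CQ}_n\to\mtn_n\to\mqn$ via the build-tree code and $\phi$ is exactly this map in disguise (the three gap types correspond to left, middle, and right exterior nodes under $\phi$), so there is no substantive divergence. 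The paper's formulation buys a self-contained description usable without invoking trees, which is what the applications in Table~\ref{Table1} and Theorems~\ref{thm13}--\ref{thm14} actually need; your route buys that nothing beyond the already-verified $\phi$ has to be checked, and your $(2n-1)!!$ enumerative sanity check is a pleasant addition the paper does not include.
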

\begin{proof}
For $n\geqslant 2$, there are three cases to obtain an element of $\mqn$ from an element $\sigma\in\mq_{n-1}$
by putting the two copies of $n$ between $\sigma_i$ and $\sigma_{i+1}$: $\sigma_i<\sigma_{i+1},~\sigma_i=\sigma_{i+1},~\sigma_i>\sigma_{i+1}$.
Set $\Gamma(11)=(0,0)$. When $n\geqslant 2$, the bijection $\Gamma: \mqn\rightarrow \operatorname{CQ}_n$ can be defined as follows:
\begin{enumerate}
\item [$(c_1)$] $\sigma_i<\sigma_{i+1}$ if and only if $(a_{n-1},b_{n-1})=(\sigma_{i+1},1)$;
\item [$(c_2)$] $\sigma_i=\sigma_{i+1}$ if and only if $(a_{n-1},b_{n-1})=(\sigma_{i+1},2)$;
\item [$(c_3)$] $\sigma_i>\sigma_{i+1}$ if and only if $(a_{n-1},b_{n-1})=(\sigma_{i},3)$.
 \end{enumerate}
See Figure~\ref{Fig01} and Example~\ref{exsig} for illustrations.
\end{proof}
\begin{example}\label{exsig}
Given $\sigma=551443312662\in\mq_5$.
We give the procedure of creating its $\operatorname{SP}$-code:
\begin{align*}
11&\Leftrightarrow (0,0),\\
1122&\Leftrightarrow (0,0)(1,3),\\
133122&\Leftrightarrow (0,0)(1,3)(1,2),\\
14433122&\Leftrightarrow (0,0)(1,3)(1,2)(3,1),\\
5514433122&\Leftrightarrow (0,0)(1,3)(1,2)(3,1)(1,1),\\
551443312662&\Leftrightarrow (0,0)(1,3)(1,2)(3,1)(1,1)(2,2).
\end{align*}
Thus $\Gamma(\sigma)=(0,0)(1,3)(1,2)(3,1)(1,1)(2,2)$. Conversely, we get $\Gamma^{-1}(\Gamma(\sigma))=\sigma$.
\end{example}

For $\sigma\in\mqn$, let
\begin{align*}
\operatorname{Asc}(\sigma)&=\{\sigma_i\mid \sigma_{i-1}<\sigma_{i}\},\\
\operatorname{Plat}(\sigma)&=\{\sigma_i\mid \sigma_i=\sigma_{i+1}\},\\
\operatorname{Des}(\sigma)&=\{\sigma_i\mid \sigma_i>\sigma_{i+1}\},\\
\operatorname{Lap}(\sigma)&=\{\sigma_i\mid \sigma_{i-1}<\sigma_{i}=\sigma_{i+1}\},\\
\operatorname{Rpd}(\sigma)&=\{\sigma_i\mid \sigma_{i-1}=\sigma_i>\sigma_{i+1}\},\\
\operatorname{Eud}(\sigma)&=\{\sigma_i\mid \sigma_{i-1}<\sigma_i=\sigma_j>\sigma_{j+1},~i<j\},\\
\operatorname{Dasc}(\sigma)&=\{\sigma_i\mid \sigma_{i-1}<\sigma_{i}<\sigma_{i+1}\},\\
\operatorname{Dplat}(\sigma)&=\{\sigma_i\mid \sigma_{i-1}>\sigma_i=\sigma_{i+1}\},\\
\operatorname{Ddes}(\sigma)&=\{\sigma_i\mid \sigma_{i-1}>\sigma_i>\sigma_{i+1}\},\\
\operatorname{Pasc}(\sigma)&=\{\sigma_i\mid \sigma_{i-1}=\sigma_i<\sigma_{i+1}\},\\
\operatorname{Apd}(\sigma)&=\{\sigma_i\mid \sigma_{i-1}<\sigma_i=\sigma_{i+1}>\sigma_{i+1}\},\\
\operatorname{Uu}(\sigma)&=\{\sigma_i\mid \sigma_{i-1}<\sigma_i=\sigma_{j}<\sigma_{j+1},~i<j\},\\
\operatorname{Dd}(\sigma)&=\{\sigma_i\mid \sigma_{i-1}>\sigma_i=\sigma_{j}>\sigma_{j+1},~i<j\}\\
\end{align*}
denote the sets of ascents, plateaux, descents, left ascent-plateaux, right plateau-descents, exterior up-down-pais, double ascents, descent-plateaux, double descents, plateau-ascents, ascent-plateau-descents, up-up-pairs and down-down-pairs of $\sigma$, respectively.
We use $\dasc(\sigma)$, $\dplat(\sigma)$, $\ddes(\sigma)$, $\pasc(\sigma)$, $\operatorname{apd}(\sigma)$, $\operatorname{uu}(\sigma)$ and $\operatorname{dd}(\sigma)$ to denote the
number of double ascents, descent-plateaux, double descents, plateau-ascents, ascent-plateau-descents, up-up-pairs and down-down-pairs of $\sigma$, respectively, i.e., $\dasc(\sigma)=\#\Dasc(\sigma),~\dplat(\sigma)=\#\Dplat(\sigma),~\ddes(\sigma)=\#\Ddes(\sigma)$, $\pasc(\sigma)=\#\Pasc(\sigma)$,
$\operatorname{apd}(\sigma)=\#\operatorname{Apd}(\sigma)$, $\operatorname{uu}(\sigma)=\#\operatorname{Uu}(\sigma)$ and $\operatorname{dd}(\sigma)=\#\operatorname{Dd}(\sigma)$.

Using the bijection $\phi$, it is easily seen that the set-valued statistics on Stirling permutations listed in Table~\ref{Table1}
correspond to the given set-valued statistics on $\operatorname{SP}$-codes. We illustrate
these correspondences in Example~\ref{example10}.
By Table~\ref{Table1}, a large number of properties of Stirling permutations can be easily deduced.
\begin{table}[h!]
  \begin{center}
    \begin{tabular}{l|c} 
      \textbf{Statistics on Stirling permutation} & \textbf{Statistics on $\operatorname{SP}$-code}\\
      \hline
      $\Asc$ (ascent) & $[n]-\{a_i\mid (a_i,1)\in C_n\}$ \\
      $\Plat$ (plateau) & $[n]-\{a_i\mid (a_i,2)\in C_n\}$ \\
      $\Des$ (descent)& $[n]-\{a_i\mid (a_i,3)\in C_n\}$ \\
    $\Lap$ (left ascent-plateau)& $[n]-\{a_i\mid (a_i,1)~\text{or}~ (a_i,2)\in C_n\}$ \\
    $\Rpd$ (right plateau-descent)& $[n]-\{a_i\mid (a_i,2)~\text{or}~ (a_i,3)\in C_n\}$ \\
   $\Eudd$ (exterior up-down-pair)& $[n]-\{a_i\mid (a_i,1)~\text{or}~ (a_i,3)\in C_n\}$ \\
    $\Dasc$ (double ascent)& $\{a_i\mid (a_i,1)\notin C_n~\&~ (a_i,2)\in C_n\}$ \\
    $\Dplat$ (descent-plateau)& $\{a_i\mid (a_i,1)\in C_n~\&~ (a_i,2)\notin C_n\}$ \\
   $\Ddes$ (double descent)& $\{a_i\mid (a_i,2)\in C_n~\&~ (a_i,3)\notin C_n\}$ \\
  $\Pasc$ (plateau-ascent)& $\{a_i\mid (a_i,2)\notin C_n~\&~ (a_i,3)\in C_n\}$ \\
  $\operatorname{Apd}$ (ascent-plateau-descent)& $\{a_i\mid (a_i,1)\notin C_n~\&~ (a_i,2)\notin C_n~\&~ (a_i,3)\notin C_n\}$\\
  $\operatorname{Uu}$ (up-up-pair)& $\{a_i\mid (a_i,1)\notin C_n~\&~ (a_i,3)\in C_n\}$\\
  $\operatorname{Dd}$ (down-down-pair)& $\{a_i\mid (a_i,1)\in C_n~\&~ (a_i,3)\notin C_n\}$
    \end{tabular}
  \end{center}
\caption{The correspondences of statistics on Stirling permutations and $\operatorname{SP}$-codes}
\label{Table1}
\end{table}

\begin{example}\label{example10}
Let $\sigma=77441223315665$. The corresponding $\operatorname{SP}$-code is given by $$C_7=(0,0)(1,2)(2,3)(1,1)(1,3)(5,2)(4,1).$$
Then we have
\begin{align*}
\operatorname{Asc}(\sigma)&=[7]-\{a_i\mid (a_i,1)\in C_7\}=\{2,3,5,6,7\},\\
\operatorname{Plat}(\sigma)&=[7]-\{a_i\mid (a_i,2)\in C_7\}=\{2,3,4,6,7\},\\
\operatorname{Des}(\sigma)&=[7]-\{a_i\mid (a_i,3)\in C_7\}=\{3,4,5,6,7\},\\
\operatorname{Lap}(\sigma)&=[7]-\{a_i\mid (a_i,1)~\text{or}~ (a_i,2)\in C_7\}=\{2,3,6,7\},\\
\operatorname{Rpd}(\sigma)&=[7]-\{a_i\mid (a_i,2)~\text{or}~ (a_i,3)\in C_7\}=\{3,4,6,7\},\\
\operatorname{Eud}(\sigma)&=[7]-\{a_i\mid (a_i,1)~\text{or}~ (a_i,3)\in C_7\}=\{3,5,6,7\},\\
\operatorname{Dasc}(\sigma)&=\{a_i\mid (a_i,1)\notin C_7~\&~ (a_i,2)\in C_7\}=\{5\},\\
\operatorname{Dplat}(\sigma)&=\{a_i\mid (a_i,1)\in C_7~\&~ (a_i,2)\notin C_7\}=\{4\},\\
\operatorname{Ddes}(\sigma)&=\{a_i\mid (a_i,2)\in C_7~\&~ (a_i,3)\notin C_7\}=\{5\},\\
\operatorname{Pasc}(\sigma)&=\{a_i\mid (a_i,2)\notin C_7~\&~ (a_i,3)\in C_7\}=\{2\},\\
\operatorname{Apd}(\sigma)&=\{a_i\mid (a_i,1)\notin C_7~\&~ (a_i,2)\notin C_7~\&~ (a_i,3)\notin C_7\}=\{3,6\},\\
\operatorname{Uu}(\sigma)&=\{a_i\mid (a_i,1)\notin C_7~\&~ (a_i,3)\in C_7\}=\{2\},\\
\operatorname{Dd}(\sigma)&=\{a_i\mid (a_i,1)\in C_7~\&~ (a_i,3)\notin C_7\}=\{4\}.
\end{align*}
\end{example}

The following two results give several generalizations of~\eqref{Bona}, which can be proved by switching some $2$-tuples in the corresponding $\operatorname{SP}$-codes.
\begin{theorem}\label{thm13}
The six bivariable set-valued statistics are all equidistributed on $\mqn$:
$$\left(\operatorname{Asc},\operatorname{Dasc}\right),~\left(\operatorname{Plat},\operatorname{Dplat}\right),~\left(\operatorname{Des},\operatorname{Ddes}\right),$$
$$\left(\operatorname{Asc},\operatorname{Uu}\right),~\left(\operatorname{Plat},\operatorname{Pasc}\right),~\left(\operatorname{Des},\operatorname{Dd}\right).$$
So we get the following four identities:
$$\sum_{\sigma\in\mqn}x^{\asc(\sigma)}y^{\dasc(\sigma)}=\sum_{\sigma\in\mqn}x^{\plat(\sigma)}y^{\dplat(\sigma)}=\sum_{\sigma\in\mqn}x^{\des(\sigma)}y^{\ddes(\sigma)},$$
$$\sum_{\sigma\in\mqn}x^{\asc(\sigma)}y^{\dasc(\sigma)}=\sum_{\sigma\in\mqn}x^{\plat(\sigma)}y^{\operatorname{pasc}(\sigma)}=\sum_{\sigma\in\mqn}x^{\des(\sigma)}y^{\ddes(\sigma)},$$
$$\sum_{\sigma\in\mqn}x^{\asc(\sigma)}y^{\operatorname{uu}(\sigma)}=\sum_{\sigma\in\mqn}x^{\plat(\sigma)}y^{\operatorname{pasc}(\sigma)}
=\sum_{\sigma\in\mqn}x^{\des(\sigma)}y^{\operatorname{dd}(\sigma)},$$
$$\sum_{\sigma\in\mqn}x^{\asc(\sigma)}y^{\operatorname{uu}(\sigma)}=\sum_{\sigma\in\mqn}x^{\plat(\sigma)}y^{\dplat(\sigma)}
=\sum_{\sigma\in\mqn}x^{\des(\sigma)}y^{\operatorname{dd}(\sigma)}.$$
\end{theorem}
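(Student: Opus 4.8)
The plan is to work entirely on the side of $\operatorname{SP}$-codes, using the dictionary of Table~\ref{Table1} together with Theorem~\ref{thmCQ}, and to exhibit, for each of the six bivariate pairs, an explicit involution (or bijection) on $\operatorname{CQ}_n$ that carries one set-valued statistic to another. Recall that an $\operatorname{SP}$-code is the sequence $C_n=((0,0),(a_1,b_1),\ldots,(a_{n-1},b_{n-1}))$, and that the thirteen set-valued statistics in Table~\ref{Table1} are all built from the three ``fibers'' $F_1(C_n)=\{a_i\mid(a_i,1)\in C_n\}$, $F_2(C_n)=\{a_i\mid(a_i,2)\in C_n\}$, $F_3(C_n)=\{a_i\mid(a_i,3)\in C_n\}\subseteq[n]$. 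For instance $\operatorname{Asc}=[n]\setminus F_1$, $\operatorname{Plat}=[n]\setminus F_2$, $\operatorname{Des}=[n]\setminus F_3$, $\operatorname{Dasc}=F_2\setminus F_1$, $\operatorname{Dplat}=F_1\setminus F_2$, $\operatorname{Ddes}=F_2\setminus F_3$, $\operatorname{Pasc}=F_3\setminus F_2$, $\operatorname{Uu}=F_3\setminus F_1$, $\operatorname{Dd}=F_1\setminus F_3$.

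First I would isolate the key local move: for a fixed value $v\in[n]$, the set of $2$-tuples of $C_n$ with first coordinate $v$ is some subset $S_v\subseteq\{(v,1),(v,2),(v,3)\}$, and these choices are completely independent across distinct values $v$ (this independence is exactly what the build-tree description of $\operatorname{SP}$-codes gives: which children a node has is unconstrained by the other nodes). Hence any permutation $\tau$ of the three labels $\{1,2,3\}$ induces a bijection $\Phi_\tau:\operatorname{CQ}_n\to\operatorname{CQ}_n$ by replacing each $(a_i,b_i)$ with $(a_i,\tau(b_i))$, and $\Phi_\tau$ sends $F_j$ to $F_{\tau(j)}$. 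The three transpositions and the two $3$-cycles of $\mathfrak S_{\{1,2,3\}}$ then relabel the fibers, and one reads off the induced action on the statistics. Concretely: $\Phi_{(1\,2)}$ swaps $(\operatorname{Asc},\operatorname{Dasc})\leftrightarrow(\operatorname{Plat},\operatorname{Dplat})$ since it swaps $F_1\leftrightarrow F_2$, hence $[n]\setminus F_1\leftrightarrow[n]\setminus F_2$ and $F_2\setminus F_1\leftrightarrow F_1\setminus F_2$; $\Phi_{(1\,3)}$ swaps $(\operatorname{Asc},\operatorname{Uu})\leftrightarrow(\operatorname{Des},\operatorname{Dd})$; $\Phi_{(2\,3)}$ swaps $(\operatorname{Plat},\operatorname{Pasc})\leftrightarrow(\operatorname{Des},\operatorname{Ddes})$; and the $3$-cycle $\Phi_{(1\,2\,3)}$ (or its inverse) links the ``$(\operatorname{Asc},\operatorname{Dasc})$ family'' to the ``$(\operatorname{Plat},\operatorname{Pasc})$ family,'' bridging the two rows of the displayed list. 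Chaining these relabelings establishes that all six pairs are equidistributed, and since each $\Phi_\tau$ fixes the ground set $[n]$, the equidistribution is in the strong set-valued sense claimed. The four displayed scalar identities follow immediately by taking cardinalities and recording which pairs share a first coordinate (e.g. $\operatorname{Asc}$ and $\operatorname{Uu}$ both equal $[n]\setminus F_1$ up to the relevant move, giving $\asc=\asc$ but with companion statistic $\dasc$ versus $\operatorname{uu}$, etc.).

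The main thing to get right — and the only place where a little care is needed — is the bookkeeping in the last two identities, where the two companion statistics on the same side ($\dasc$ vs.\ $\operatorname{uu}$, or $\dplat$ vs.\ $\operatorname{dd}$) are genuinely different functions of the same code, so one cannot use a single $\Phi_\tau$ for all four identities simultaneously; instead each identity is the image of~\eqref{Bona} under one specific element of $\mathfrak S_{\{1,2,3\}}$, and I would present a small table matching each of the four identities to its transposition or $3$-cycle. I expect no serious obstacle beyond this matching: the independence-across-values observation does all the work, and the verification that each $\Phi_\tau$ is a well-defined bijection on $\operatorname{CQ}_n$ is immediate from Definition~\ref{def-BSP} (the conditions $1\le a_i\le i$, $1\le b_i\le3$, and distinctness of the tuples are all preserved by relabeling $b$-coordinates). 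Finally I would run the computation through Example~\ref{example10} as a sanity check, applying $\Phi_{(1\,2)}$ to $C_7=(0,0)(1,2)(2,3)(1,1)(1,3)(5,2)(4,1)$ and confirming that $\operatorname{Plat}$ of the new code equals the old $\operatorname{Asc}=\{2,3,5,6,7\}$ and $\operatorname{Dplat}$ of the new code equals the old $\operatorname{Dasc}=\{5\}$.
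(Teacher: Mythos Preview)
Your proposal is correct and is essentially the same argument as the paper's: both proofs transport the statistics to $\operatorname{SP}$-codes via Table~\ref{Table1} and then apply the bijections on $\operatorname{CQ}_n$ obtained by relabeling the second coordinates $b_i$ by a permutation of $\{1,2,3\}$ (the paper phrases this as ``switching the $2$-tuples $(a_i,j)$ and $(a_i,k)$ for all $i$''), after which the equidistributions are read off from the fiber description and transitivity. Your packaging via the full $\mathfrak S_{\{1,2,3\}}$-action and the notation $F_1,F_2,F_3$ is a bit more systematic, but the content is identical.
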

\begin{proof}
Consider Table~\ref{Table1}. For $C_n\in\operatorname{CQ}_n$, if we switch the $2$-tuples $(a_i,1)$ and $(a_i,2)$ for all $i$ (if any), then we see that
the following bivariable set-valued statistics are equidistributed on $\operatorname{CQ}_n$:
$$\left([n]-\{a_i\mid (a_i,1)\in C_n\},\{a_i\mid (a_i,1)\notin C_n~\&~ (a_i,2)\in C_n\}\right),$$
$$\left([n]-\{a_i\mid (a_i,2)\in C_n\},\{a_i\mid (a_i,1)\in C_n~\&~ (a_i,2)\notin C_n\}\right).$$
which yields $\left(\operatorname{Asc},\operatorname{Dasc}\right)$ and $\left(\operatorname{Plat},\operatorname{Dplat}\right)$ are equidistributed on $\mqn$.

If we switch the $2$-tuples $(a_i,1)$ and $(a_i,3)$ for all $i$ (if any), then we find that
the following bivariable set-valued statistics are equidistributed on $\operatorname{CQ}_n$:
$$\left([n]-\{a_i\mid (a_i,1)\in C_n\},\{a_i\mid (a_i,1)\notin C_n~\&~ (a_i,2)\in C_n\}\right),$$
$$\left([n]-\{a_i\mid (a_i,3)\in C_n\},\{a_i\mid (a_i,2)\in C_n~\&~ (a_i,3)\notin C_n\}\right),$$
which yields $\left(\operatorname{Asc},\operatorname{Dasc}\right)$ and $\left(\operatorname{Des},\operatorname{Ddes}\right)$ are equidistributed on $\mqn$.

If we switch the $2$-tuples $(a_i,1)$ and $(a_i,2)$ for all $i$ (if any), then we see that
the following bivariable set-valued statistics are equidistributed on $\operatorname{CQ}_n$:
$$\left([n]-\{a_i\mid (a_i,1)\in C_n\},\{a_i\mid (a_i,1)\notin C_n~\&~ (a_i,3)\in C_n\}\right),$$
$$\left([n]-\{a_i\mid (a_i,2)\in C_n\},\{a_i\mid (a_i,2)\notin C_n~\&~ (a_i,3)\in C_n\}\right),$$
which yields $\left(\operatorname{Asc},\operatorname{Uu}\right)$ and $\left(\operatorname{Plat},\operatorname{Pasc}\right)$ are equidistributed on $\mqn$.

If we switch the $2$-tuples $(a_i,1)$ and $(a_i,3)$ for all $i$ (if any), then we find that
the following bivariable set-valued statistics are equidistributed on $\operatorname{CQ}_n$:
$$\left([n]-\{a_i\mid (a_i,1)\in C_n\},\{a_i\mid (a_i,1)\notin C_n~\&~ (a_i,3)\in C_n\}\right),$$
$$\left([n]-\{a_i\mid (a_i,3)\in C_n\},\{a_i\mid (a_i,1)\in C_n~\&~ (a_i,3)\notin C_n\}\right),$$
which yields $\left(\operatorname{Asc},\operatorname{Uu}\right)$ and $\left(\operatorname{Des},\operatorname{Dd}\right)$ are equidistributed on $\mqn$.

If we switch the $2$-tuples $(a_i,2)$ and $(a_i,3)$ for all $i$ (if any), then we find that
the following bivariable set-valued statistics are equidistributed on $\operatorname{CQ}_n$:
$$\left([n]-\{a_i\mid (a_i,2)\in C_n\},\{a_i\mid (a_i,1)\in C_n~\&~ (a_i,2)\notin C_n\}\right).$$
$$\left([n]-\{a_i\mid (a_i,3)\in C_n\},\{a_i\mid (a_i,1)\in C_n~\&~ (a_i,3)\notin C_n\}\right),$$
which yields $\left(\operatorname{Plat},\operatorname{Dplat}\right)$ and $\left(\operatorname{Des},\operatorname{Dd}\right)$ are equidistributed on $\mqn$.

In conclusion, the proof is completed by using transitivity.
\end{proof}

\begin{theorem}\label{thm14}
The six bivariable set-valued statistics are equidistributed on $\mqn$:
$$\left(\operatorname{Asc},\operatorname{Lap}\right),~\left(\operatorname{Plat},\operatorname{Lap}\right),~\left(\operatorname{Des},\operatorname{Rpd}\right),$$
$$\left(\operatorname{Asc},\operatorname{Eud}\right),~\left(\operatorname{Plat},\operatorname{Rpd}\right),~\left(\operatorname{Des},\operatorname{Eud}\right).$$
So we get the following six identities:
$$\sum_{\sigma\in\mqn}x^{\asc(\sigma)}y^{\operatorname{lap}(\sigma)}=\sum_{\sigma\in\mqn}x^{\plat(\sigma)}y^{\operatorname{lap}(\sigma)}
=\sum_{\sigma\in\mqn}x^{\des(\sigma)}y^{\operatorname{rpd}(\sigma)},$$
$$\sum_{\sigma\in\mqn}x^{\asc(\sigma)}y^{\operatorname{lap}(\sigma)}=\sum_{\sigma\in\mqn}x^{\plat(\sigma)}y^{\operatorname{rpd}(\sigma)}
=\sum_{\sigma\in\mqn}x^{\des(\sigma)}y^{\operatorname{rpd}(\sigma)},$$
$$\sum_{\sigma\in\mqn}x^{\asc(\sigma)}y^{\operatorname{eud}(\sigma)}=\sum_{\sigma\in\mqn}x^{\plat(\sigma)}y^{\operatorname{rpd}(\sigma)}
=\sum_{\sigma\in\mqn}x^{\des(\sigma)}y^{\operatorname{eud}(\sigma)},$$
$$\sum_{\sigma\in\mqn}x^{\asc(\sigma)}y^{\operatorname{eud}(\sigma)}=\sum_{\sigma\in\mqn}x^{\plat(\sigma)}y^{\operatorname{lap}(\sigma)}
=\sum_{\sigma\in\mqn}x^{\des(\sigma)}y^{\operatorname{eud}(\sigma)},$$
$$\sum_{\sigma\in\mqn}x^{\asc(\sigma)}y^{\operatorname{lap}(\sigma)}=\sum_{\sigma\in\mqn}x^{\plat(\sigma)}y^{\operatorname{rpd}(\sigma)}
=\sum_{\sigma\in\mqn}x^{\des(\sigma)}y^{\operatorname{eud}(\sigma)},$$
$$\sum_{\sigma\in\mqn}x^{\asc(\sigma)}y^{\operatorname{eud}(\sigma)}=\sum_{\sigma\in\mqn}x^{\plat(\sigma)}y^{\operatorname{lap}(\sigma)}
=\sum_{\sigma\in\mqn}x^{\des(\sigma)}y^{\operatorname{rpd}(\sigma)},$$
where the last two identities give the generalizations of~\eqref{Bona} and~\eqref{laprpd} simultaneously.
\end{theorem}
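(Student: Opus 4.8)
The plan is to prove Theorem~\ref{thm14} in exactly the same spirit as Theorem~\ref{thm13}: translate everything to the level of $\operatorname{SP}$-codes via the bijection $\Gamma$ of Theorem~\ref{thmCQ}, read off the set-valued statistics from Table~\ref{Table1}, and exhibit explicit involutions on $\operatorname{CQ}_n$ obtained by swapping the second coordinate $b_i\in\{1,2,3\}$ of the $2$-tuples according to a fixed transposition of $\{1,2,3\}$. Concretely, for a fixed transposition $\tau$ of $\{1,2,3\}$, define $\rho_\tau\colon\operatorname{CQ}_n\to\operatorname{CQ}_n$ by replacing each entry $(a_i,b_i)$ with $(a_i,\tau(b_i))$. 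Since for a fixed value $a$ the set of $b$ with $(a,b)\in C_n$ is an arbitrary subset of $\{1,2,3\}$, and $\tau$ is a bijection of $\{1,2,3\}$, the map $\rho_\tau$ is a well-defined involution on $\operatorname{CQ}_n$ preserving the multiset $\{a_i\}$.

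First I would record, from Table~\ref{Table1}, how the six statistics $\Asc,\Plat,\Des,\Lap,\Rpd,\Eudd$ are expressed in the code: $\Asc(\sigma)=[n]\setminus\{a_i:(a_i,1)\in C_n\}$ and similarly with $2,3$ for $\Plat,\Des$; and $\Lap=[n]\setminus\{a_i:(a_i,1)\text{ or }(a_i,2)\in C_n\}$, $\Rpd=[n]\setminus\{a_i:(a_i,2)\text{ or }(a_i,3)\in C_n\}$, $\Eudd=[n]\setminus\{a_i:(a_i,1)\text{ or }(a_i,3)\in C_n\}$. The key observation is that each of these is determined by which of $b=1,2,3$ occur for a given $a$, and the three "pair" statistics $\Lap,\Eudd,\Rpd$ correspond respectively to the $a$'s for which the $b$-set avoids $\{1,2\}$, avoids $\{1,3\}$, avoids $\{2,3\}$. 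Applying $\rho_\tau$ for the three transpositions $\tau=(12),(13),(23)$ therefore permutes the roles of $\{\Asc,\Plat,\Des\}$ among themselves and simultaneously permutes the roles of $\{\Lap,\Eudd,\Rpd\}$ among themselves, in a coordinated way.

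The core of the argument is then a short bookkeeping step: I would tabulate, for each of the three transpositions, which bivariate pair $(\operatorname{X},\operatorname{Y})$ from the list $(\Asc,\Lap),(\Plat,\Lap),(\Des,\Rpd),(\Asc,\Eudd),(\Plat,\Rpd),(\Des,\Eudd)$ maps to which, verifying from the formulas above that $\rho_{(12)}$ swaps $\Asc\leftrightarrow\Plat$ while fixing $\Lap$ (since avoiding $\{1,2\}$ is invariant under $\tau=(12)$), $\rho_{(23)}$ swaps $\Plat\leftrightarrow\Des$ while fixing $\Rpd$, and $\rho_{(13)}$ swaps $\Asc\leftrightarrow\Des$ while fixing $\Eudd$; and that the remaining swaps send, e.g., $\rho_{(12)}\colon\Eudd\mapsto\Rpd$, $\rho_{(23)}\colon\Lap\mapsto\Eudd$, $\rho_{(13)}\colon\Lap\mapsto\Rpd$. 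Composing these three involutions and invoking transitivity then links all six bivariate statistics into a single equidistribution class, yielding the six displayed identities; the last two are precisely the ones combining $\asc/\plat/\des$ on one side with $\lap/\rpd/\eudd$ on the other, hence simultaneously generalize~\eqref{Bona} and~\eqref{laprpd}.

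I do not expect a serious obstacle; the only delicate point is the verification that $\rho_\tau$ genuinely preserves $\operatorname{CQ}_n$ — i.e.\ that after relabelling the second coordinates the tuples are still pairwise distinct with $1\le a_i\le i$ and $1\le b_i\le 3$ — which is immediate because $\tau$ acts injectively on $\{1,2,3\}$ and fixes each $a_i$, so distinctness of $(a_i,b_i)$ is preserved. The one thing to be careful about is to state the fixed/swapped behaviour of the pair statistics consistently with the exact set-definitions in Table~\ref{Table1} (in particular that $\Lap$ is the complement of the $a$'s whose $b$-set meets $\{1,2\}$), so that the transitivity chain is assembled correctly; this is routine case-checking over the three transpositions.
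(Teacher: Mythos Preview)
Your proposal is correct and follows essentially the same route as the paper: the paper also works on $\operatorname{CQ}_n$ via Table~\ref{Table1} and, for each needed equidistribution, ``switches the $2$-tuples $(a_i,j)$ and $(a_i,k)$ for all $i$'' (i.e.\ exactly your $\rho_\tau$ for $\tau=(jk)$), then concludes by transitivity. Your presentation is slightly more streamlined in that you name the involutions $\rho_\tau$ once and tabulate their joint action on $(\Asc,\Plat,\Des)$ and $(\Lap,\Eudd,\Rpd)$, but the content is identical.
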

\begin{proof}
Consider Table~\ref{Table1}. For $C_n\in\operatorname{CQ}_n$, if we switch the $2$-tuples $(a_i,1)$ and $(a_i,2)$ for all $i$ (if any), then we see that
the following bivariable set-valued statistics are equidistributed on $\operatorname{CQ}_n$:
$$([n]-\{a_i\mid (a_i,1)\in C_n\},[n]-\{a_i\mid (a_i,1)~\text{or}~ (a_i,2)\in C_n\}),$$
$$([n]-\{a_i\mid (a_i,2)\in C_n\},[n]-\{a_i\mid (a_i,1)~\text{or}~ (a_i,2)\in C_n\}),$$
which yields $\left(\operatorname{Asc},\operatorname{Lap}\right)$ and $\left(\operatorname{Plat},\operatorname{Lap}\right)$ are equidistributed on $\mqn$.

If we switch the $2$-tuples $(a_i,1)$ and $(a_i,3)$ for all $i$ (if any), then we see that
the following bivariable set-valued statistics are equidistributed on $\operatorname{CQ}_n$:
$$([n]-\{a_i\mid (a_i,1)\in C_n\},[n]-\{a_i\mid (a_i,1)~\text{or}~ (a_i,2)\in C_n\}),$$
$$([n]-\{a_i\mid (a_i,3)\in C_n\},[n]-\{a_i\mid (a_i,2)~\text{or}~ (a_i,3)\in C_n\}),$$
which yields $\left(\operatorname{Asc},\operatorname{Lap}\right)$ and $\left(\operatorname{Des},\operatorname{Rpd}\right)$ are equidistributed on $\mqn$.

If we switch the $2$-tuples $(a_i,1)$ and $(a_i,2)$ for all $i$ (if any), then we see that
the following bivariable set-valued statistics are equidistributed on $\operatorname{CQ}_n$:
$$([n]-\{a_i\mid (a_i,1)\in C_n\},[n]-\{a_i\mid (a_i,1)~\text{or}~ (a_i,3)\in C_n\}),$$
$$([n]-\{a_i\mid (a_i,2)\in C_n\},[n]-\{a_i\mid (a_i,2)~\text{or}~ (a_i,3)\in C_n\}),$$
which yields $\left(\operatorname{Asc},\operatorname{Eud}\right)$ and $\left(\operatorname{Plat},\operatorname{Rpd}\right)$ are equidistributed on $\mqn$.

If we switch the $2$-tuples $(a_i,1)$ and $(a_i,3)$ for all $i$ (if any), then we see that
the following bivariable set-valued statistics are equidistributed on $\operatorname{CQ}_n$:
$$([n]-\{a_i\mid (a_i,1)\in C_n\},[n]-\{a_i\mid (a_i,1)~\text{or}~ (a_i,3)\in C_n\}),$$
$$([n]-\{a_i\mid (a_i,3)\in C_n\},[n]-\{a_i\mid (a_i,1)~\text{or}~ (a_i,3)\in C_n\}),$$
which yields $\left(\operatorname{Asc},\operatorname{Eud}\right)$ and $\left(\operatorname{Des},\operatorname{Eud}\right)$ are equidistributed on $\mqn$.

If we switch the $2$-tuples $(a_i,2)$ and $(a_i,3)$ for all $i$ (if any), then we see that
the following bivariable set-valued statistics are equidistributed on $\operatorname{CQ}_n$:
$$([n]-\{a_i\mid (a_i,1)\in C_n\},[n]-\{a_i\mid (a_i,1)~\text{or}~ (a_i,2)\in C_n\}),$$
$$([n]-\{a_i\mid (a_i,1)\in C_n\},[n]-\{a_i\mid (a_i,1)~\text{or}~ (a_i,3)\in C_n\}),$$
which yields $\left(\operatorname{Asc},\operatorname{Lap}\right)$ and $\left(\operatorname{Asc},\operatorname{Eud}\right)$ are equidistributed on $\mqn$.

In conclusion, the proof is completed by using transitivity.
\end{proof}

We say that a joint distribution of (set-valued) statistics or a multivariate polynomial is {\it symmetric} if
it is invariant under any permutation of its indeterminates.
We can now present the following two results, and the proofs follow in the same way as the proof of Theorem~\ref{thm14}.
\begin{theorem}\label{Thmset01}
The six set-valued statistics are all equidistributed on $\mqn$:
$$\operatorname{Dasc},\operatorname{Dplat},\operatorname{Ddes},\operatorname{Pasc},\operatorname{Uu},~\operatorname{Dd}.$$
Moreover,
if one select any two set-valued statistics from these six set-valued statistics, then the selected two set-valued statistics
are symmetric on $\mqn$.
\end{theorem}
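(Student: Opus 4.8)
The plan is to carry everything out on the level of $\operatorname{SP}$-codes, using the fact, read off from Table~\ref{Table1}, that each of the six statistics sees $\Gamma(\sigma)$ only through the local data at each value. For $\sigma\in\mqn$ and $v\in[n]$ put $E_v(\sigma)=\{b\in\{1,2,3\}\mid (v,b)\text{ occurs in }\Gamma(\sigma)\}$. Then each of $\Dasc,\Dplat,\Ddes,\Pasc,\operatorname{Uu},\operatorname{Dd}$ has the form $\sigma\mapsto\{v\in[n]\mid E_v(\sigma)\in\mathcal F\}$ for a suitable family $\mathcal F$ of subsets of $\{1,2,3\}$: for instance $\mathcal F=\{\{2\},\{2,3\}\}$ for $\Dasc$, $\mathcal F=\{\{1\},\{1,3\}\}$ for $\Dplat$, $\mathcal F=\{\{2\},\{1,2\}\}$ for $\Ddes$, $\mathcal F=\{\{3\},\{1,3\}\}$ for $\Pasc$, $\mathcal F=\{\{3\},\{2,3\}\}$ for $\operatorname{Uu}$, and $\mathcal F=\{\{1\},\{1,2\}\}$ for $\operatorname{Dd}$. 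The only feature of these families that will be used is that each consists of exactly one $1$-element and one $2$-element subset of $\{1,2,3\}$.

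The heart of the argument will be a lifting lemma: for every cardinality-preserving bijection $\mu$ of $2^{\{1,2,3\}}$ there is a bijection $\Psi_\mu$ of $\mqn$ with $E_v(\Psi_\mu\sigma)=\mu(E_v(\sigma))$ for all $v\in[n]$, and $\Psi_\mu$ is an involution whenever $\mu$ is. To build $\Psi_\mu$ one works with $\Gamma(\sigma)=((0,0),(a_1,b_1),\ldots,(a_{n-1},b_{n-1}))$, leaves the first coordinates $a_1,\ldots,a_{n-1}$ untouched, and, for each fixed value $v$, relabels the second coordinates of the indices $i$ with $a_i=v$ rank-preservingly: these second coordinates are pairwise distinct and form $E_v(\sigma)$, so one gives the index carrying the $k$-th smallest of them the $k$-th smallest element of $\mu(E_v(\sigma))$. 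Since $\mu$ preserves cardinalities this uses each element of $\mu(E_v(\sigma))$ exactly once, so the result is again an $\operatorname{SP}$-code---the constraints $1\le a_i\le i$ and the pairwise distinctness of the $2$-tuples are plainly preserved---whose $E_v$-data is $\mu(E_v(\sigma))$; and if $\mu^2=\operatorname{id}$ a second application returns each index to its original rank, hence to its original second coordinate, so $\Psi_\mu$ is an involution.

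Granting the lemma, the theorem follows quickly. Given any two of the six statistics, with families $\mathcal F_1$ and $\mathcal F_2$, let $\mu$ be the permutation of $2^{\{1,2,3\}}$ that exchanges the $1$-element member of $\mathcal F_1$ with the $1$-element member of $\mathcal F_2$, exchanges the $2$-element member of $\mathcal F_1$ with that of $\mathcal F_2$, and fixes every remaining subset of $\{1,2,3\}$ (should a member of $\mathcal F_1$ coincide with the corresponding member of $\mathcal F_2$, it is simply fixed). Being a product of two (possibly trivial) disjoint transpositions, each exchanging two subsets of the same size, $\mu$ is a cardinality-preserving involution, and by construction $\mu(\mathcal F_1)=\mathcal F_2$. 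Hence $\Psi_\mu$ is an involution of $\mqn$, and using $\mu=\mu^{-1}$ one gets $\operatorname{stat}_1(\Psi_\mu\sigma)=\{v\mid E_v(\sigma)\in\mathcal F_2\}=\operatorname{stat}_2(\sigma)$ and likewise $\operatorname{stat}_2(\Psi_\mu\sigma)=\operatorname{stat}_1(\sigma)$, where $\operatorname{stat}_i$ is the statistic attached to $\mathcal F_i$. In particular $\operatorname{stat}_1$ and $\operatorname{stat}_2$ are equidistributed, and $\Psi_\mu$ restricts to a bijection between $\{\sigma\mid\operatorname{stat}_1(\sigma)=A,\ \operatorname{stat}_2(\sigma)=B\}$ and $\{\sigma\mid\operatorname{stat}_1(\sigma)=B,\ \operatorname{stat}_2(\sigma)=A\}$ for all $A,B\subseteq[n]$, which is the claimed symmetry of the pair; since this holds for every pair, all six statistics are equidistributed.

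The only real obstacle is the lifting lemma itself, i.e. the observation that an $\operatorname{SP}$-code splits into a ``tree part'' (the first coordinates $a_i$, which pin down the underlying ternary increasing tree and in particular the out-degrees) and a ``direction part'' (the second coordinates), so that the directions may be rewired value-by-value by an arbitrary cardinality-preserving rule without leaving $\operatorname{CQ}_n$. When $\mu$ comes from a permutation of $\{1,2,3\}$ this is in effect the switching operation used for Theorems~\ref{thm13} and~\ref{thm14}, which can only interchange statistics in ``opposite'' classes; the point of admitting general $\mu$ is to also interchange statistics within a class---for example $\Dasc\leftrightarrow\Pasc$, via the involution that fixes $\{1\}$ and $\{1,2\}$, swaps $\{2\}\leftrightarrow\{3\}$, and swaps $\{2,3\}\leftrightarrow\{1,3\}$, which is induced by no permutation of $\{1,2,3\}$ but still lifts by the lemma.
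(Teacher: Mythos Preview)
Your argument is correct and is in the same spirit as the paper's---both work on the $\operatorname{SP}$-code side and establish equidistributions by relabelling second coordinates---but your lifting lemma is strictly more general, and that extra generality matters here. The paper does not spell out a proof; it only says ``the proofs follow in the same way as the proof of Theorem~\ref{thm14}'', i.e.\ by switching $(a_i,j)\leftrightarrow(a_i,k)$ for a transposition $(j\,k)$ of $\{1,2,3\}$. Those switches act on the six statistics as the regular representation of $S_3$, so only $9$ of the $15$ unordered pairs are exchanged by an involution in that group; the remaining six pairs (for instance $(\Dasc,\Pasc)$, $(\Dplat,\Ddes)$, $(\operatorname{Uu},\operatorname{Pasc})$, etc.) differ by a $3$-cycle, and no permutation of $\{1,2,3\}$ swaps them. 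Transitivity then yields their equidistribution but not, by itself, the pairwise symmetry.

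Your device of allowing an arbitrary \emph{cardinality-preserving} bijection $\mu$ of $2^{\{1,2,3\}}$---decoupling the action on singletons from the action on $2$-element subsets---together with the rank-preserving relabelling that realises it on codes, is exactly what is needed: for every pair $(\mathcal F_1,\mathcal F_2)$ you build a single involution $\Psi_\mu$ of $\mqn$ that swaps the two statistics, so all $15$ symmetries fall out uniformly. This is a genuine strengthening of the paper's switching and in fact fills in the part of the argument the paper leaves implicit.
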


\begin{theorem}\label{Thmset02}
The following triple set-valued statistics are all symmetric on $\mqn$:
$$(\operatorname{Asc}(\sigma),\operatorname{Plat}(\sigma),\operatorname{Des}(\sigma)),~(\operatorname{Lap}(\sigma),\operatorname{Rpd}(\sigma),\operatorname{Eud}(\sigma)),$$
$$(\operatorname{Dasc}(\sigma),\operatorname{Pasc}(\sigma),\operatorname{Dd}(\sigma)),~(\operatorname{Ddes}(\sigma),\operatorname{Dplat}(\sigma),\operatorname{Uu}(\sigma)).$$
\end{theorem}
We now give an example to illustrate the proof of the symmetric of $(\operatorname{Ddes},\operatorname{Pasc})$.
\begin{example}
Let $\sigma$ and $C_7$ be the given in Example~\ref{example10}. Then we have $\operatorname{Ddes}(\sigma)=\{5\}$ and
$\operatorname{Pasc}(\sigma)=\{2\}$.
Let $\Phi$ be the bijection on $\operatorname{CQ}_7$ that is defined by
$$(a_i,2)\leftrightarrow (a_i,3),~{\text {where $1\leqslant i\leqslant 6$}}.$$ In other words, we just switch the $2$-tuples $(a_i,2)$ and $(a_i,3)$ for all $i$ (if any).
Thus $$\Phi\left((0,0)(1,2)(2,3)(1,1)(1,3)(5,2)(4,1)\right)=(0,0)(1,3)(2,2)(1,1)(1,2)(5,3)(4,1).$$
It is routine to verify that $\phi^{-1}\left(\Phi(C_7)\right)=774415\textbf{5}661233\textbf{2}$.
Therefore, we have
$$\operatorname{Ddes}\left(\phi^{-1}\left(\Phi(C_7)\right)\right)=\{2\},~\operatorname{Pasc}\left(\phi^{-1}\left(\Phi(C_7)\right)\right)=\{5\}.$$
\end{example}
\begin{corollary}
For $n\geqslant 1$, both of the following two polynomials are symmetric in their variables:
$$C_n(x,y,z)=\sum_{\sigma\in\mqn}x^{\asc(\sigma)}y^{\plat(\sigma)}z^{\des(\sigma)},$$
$$N_n(x,y,z)=\sum_{\sigma\in\mqn}x^{\lap\sigma)}y^{\rpd(\sigma)}z^{\eudd(\sigma)}.$$
\end{corollary}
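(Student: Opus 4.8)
The plan is to read off the corollary from Theorem~\ref{Thmset02}. By the definitions of the set-valued and numerical statistics we have $\asc(\sigma)=\#\operatorname{Asc}(\sigma)$, $\plat(\sigma)=\#\operatorname{Plat}(\sigma)$, $\des(\sigma)=\#\operatorname{Des}(\sigma)$, and likewise $\lap(\sigma)=\#\operatorname{Lap}(\sigma)$, $\rpd(\sigma)=\#\operatorname{Rpd}(\sigma)$, $\eudd(\sigma)=\#\operatorname{Eud}(\sigma)$, so that
$$C_n(x,y,z)=\sum_{\sigma\in\mqn}x^{\#\operatorname{Asc}(\sigma)}y^{\#\operatorname{Plat}(\sigma)}z^{\#\operatorname{Des}(\sigma)},\qquad N_n(x,y,z)=\sum_{\sigma\in\mqn}x^{\#\operatorname{Lap}(\sigma)}y^{\#\operatorname{Rpd}(\sigma)}z^{\#\operatorname{Eud}(\sigma)}.$$
If a triple of set-valued statistics is symmetric on $\mqn$ --- meaning that for every permutation of its three arguments there is a bijection of $\mqn$ carrying the triple to its permuted version --- then composing that bijection with the map ``replace each set by its cardinality'' shows that the corresponding triple of numerical statistics has a symmetric joint distribution, hence the attached generating polynomial is symmetric. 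Since Theorem~\ref{Thmset02} gives exactly the symmetry of $(\operatorname{Asc}(\sigma),\operatorname{Plat}(\sigma),\operatorname{Des}(\sigma))$ and of $(\operatorname{Lap}(\sigma),\operatorname{Rpd}(\sigma),\operatorname{Eud}(\sigma))$ on $\mqn$, both $C_n(x,y,z)$ and $N_n(x,y,z)$ are symmetric in their variables.

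For a self-contained argument I would transport everything to $\operatorname{CQ}_n$ via the bijection $\Gamma$ of Theorem~\ref{thmCQ} and use Table~\ref{Table1}. For each pair $\{b,b'\}\subseteq\{1,2,3\}$ the map that switches the $2$-tuples $(a_i,b)\leftrightarrow(a_i,b')$ for all $i$ (whenever they occur) is a well-defined involution on $\operatorname{CQ}_n$, since it merely relabels second coordinates and so preserves distinctness of the tuples. Reading off Table~\ref{Table1}: switching $1\leftrightarrow 2$ swaps $\operatorname{Asc}$ with $\operatorname{Plat}$ and fixes $\operatorname{Des}$, while on the other triple it fixes $\operatorname{Lap}$ and swaps $\operatorname{Rpd}$ with $\operatorname{Eud}$; switching $1\leftrightarrow 3$ swaps $\operatorname{Asc}$ with $\operatorname{Des}$ and fixes $\operatorname{Plat}$, while it fixes $\operatorname{Eud}$ and swaps $\operatorname{Lap}$ with $\operatorname{Rpd}$; switching $2\leftrightarrow 3$ swaps $\operatorname{Plat}$ with $\operatorname{Des}$ and fixes $\operatorname{Asc}$, while it fixes $\operatorname{Rpd}$ and swaps $\operatorname{Lap}$ with $\operatorname{Eud}$. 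Conjugating these involutions by $\Gamma$ and passing to cardinalities yields, for each transposition of the three slots, an involution of $\mqn$ realizing it on $(\asc,\plat,\des)$ and on $(\lap,\rpd,\eudd)$ simultaneously; since transpositions generate the symmetric group on three letters, the symmetry of $C_n(x,y,z)$ and of $N_n(x,y,z)$ follows.

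There is no genuine obstacle here: the only thing to verify is that the three coordinate switches on $\operatorname{CQ}_n$ permute the set-valued statistics as indicated, and this is a direct inspection of the complement-of-projection descriptions in Table~\ref{Table1}, already carried out inside the proofs of Theorems~\ref{thm13} and~\ref{thm14}. The corollary is thus simply the numerical shadow of Theorem~\ref{Thmset02}.
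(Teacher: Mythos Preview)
Your proposal is correct and follows exactly the paper's approach: the corollary is stated immediately after Theorem~\ref{Thmset02} precisely because it is the numerical shadow of the set-valued symmetry established there, and your self-contained argument via the second-coordinate switches on $\operatorname{CQ}_n$ reproduces the mechanism underlying the proofs of Theorems~\ref{thm13}, \ref{thm14}, and~\ref{Thmset02}. Your verification of how each transposition acts on the two triples from Table~\ref{Table1} is accurate.
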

As discussed in introduction, the symmetry of $C_n(x,y,z)$ has been extensively studied, see~\cite{Chen21,Haglund12} and references therein.
In Section~\ref{section5}, we shall show the $e$-positivity of $N_n(x,y,z)$.
\section{Bijections among $\operatorname{SP}$-codes, trapezoidal words and perfect matchings}\label{section402}
Following Riordan~\cite{Riordan76}, we say that a word $t=t_1t_2\cdots t_n$ is a {\it Riordan trapezoidal word} if the element $t_i$ takes the values $1,2,\ldots,2i-1$.
Let $\operatorname{RT}_n$ be the set of Riordan trapezoidal words of length $n$. In particular, $$\operatorname{RT}_1=\{1\},~\operatorname{RT}_2=\{11,12,13\}.$$
Besides~\eqref{QnTn}, Dumont~\cite{Dumont80} gave another two interpretations of $C_n(x,y,z)$ in terms of Dumont trapezoidal words and perfect matchings.
The Dumont trapezoidal word~\cite{Dumont80} is a variant Riordan trapezoidal word.
We say that a word $w=w_1w_2\cdots w_n$ is a {\it Dumont trapezoidal word} of length $n$ if $0\leqslant |w_i|<i$, where $w_i$ are all integers.
Let $\operatorname{DT}_n$ denote the set of Dumont trapezoidal words of length $n$. For convenience, we set $\overline{i}=-i$.
In particular, $$\operatorname{DT}_1=\{0\},~\operatorname{DT}_2=\{00,01,0\overline{1}\}.$$
Given $w\in\operatorname{DT}_n$. Let $\operatorname{dist}(w)$ be the number of distinct elements in $w$, and we define
\begin{align*}
\operatorname{nneg}(\sigma)&=n-\{w_i\mid w_i<0\},~
\operatorname{npos}(\sigma)=n-\{w_i\mid w_i>0\}.
\end{align*}
Dumont~\cite[Section~2.3]{Dumont80} found that
$$C_n(x,y,z)=\sum_{w\in\in\operatorname{DT}_n}x^{\operatorname{dist}(w)}y^{\operatorname{nneg}(\sigma)}z^{\operatorname{npos}(\sigma)}.$$

A \emph{perfect matching} of $[2n]$ is a set partition of $[2n]$ with blocks (disjoint nonempty subsets) of size exactly 2.
Let $\mmn$ be the set of perfect matchings of $[2n]$, and let $\m\in\mmn$.
The \emph{standard form} of $\m$ is a list of blocks $$\{(i_1,j_1),(i_2,j_2),\ldots,(i_n,j_n)\}$$ such that
$i_r<j_r$ for all $1\leqslant r\leqslant n$ and $1=i_1<i_2<\cdots <i_n$.
As usual, we always write $\m$ in standard form.
It is well known that $\m$ can
be regarded as a fixed-point-free involution on $[2n]$.
In particular, $$\mm_2=\{(1,2)\},~\mm_4=\{(1,2)(3,4),(1,3)(2,4),(1,4)(2,3)\}.$$

Motivated by Theorem~\ref{thmCQ}, it is natural to explore the bijections among $\operatorname{SP}$-codes, trapezoidal words and perfect matchings.
\begin{theorem}
For $n\geqslant 1$, we have
\begin{equation}
\operatorname{CQ}_n\cong\operatorname{DT}_n\cong\operatorname{RT}_n\cong\mmn.
\end{equation}
\end{theorem}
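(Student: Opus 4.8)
The plan is to exhibit explicit size-preserving bijections between each consecutive pair in the chain $\operatorname{CQ}_n \cong \operatorname{DT}_n \cong \operatorname{RT}_n \cong \mmn$; since all four sets are finite, it suffices to give maps in one direction that are easily seen to be injective (and hence bijective, once we know all four sets have cardinality $(2n-1)!! = 1\cdot 3\cdots(2n-1)$). The cardinality count is immediate for three of them: an $\operatorname{SP}$-code $((0,0),(a_1,b_1),\ldots,(a_{n-1},b_{n-1}))$ has $3i$ choices for $(a_i,b_i)$ once we recall $1\le a_i\le i$ and $1\le b_i\le 3$ — wait, that overcounts, so instead I will simply transport the known count $\#\mqn = (2n-1)!!$ through Theorem~\ref{thmCQ} and verify it independently for $\operatorname{RT}_n$ and $\mmn$. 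For $\operatorname{RT}_n$: $\#\operatorname{RT}_n = \prod_{i=1}^n (2i-1) = (2n-1)!!$ directly from the definition, since $t_i$ ranges over $2i-1$ values independently. For $\mmn$: the standard recursion $\#\mmn = (2n-1)\#\mm_{2n-2}$ (match the element $1$ with any of the remaining $2n-1$ elements) gives $(2n-1)!!$ as well. So all four sets have the same cardinality, and every injection among them is automatically a bijection.

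First I would treat $\operatorname{RT}_n \cong \operatorname{DT}_n$, which is the easiest: a Riordan trapezoidal word has $t_i \in \{1,2,\ldots,2i-1\}$, and a Dumont trapezoidal word has $w_i$ an integer with $0 \le |w_i| < i$, i.e.\ $w_i \in \{-(i-1),\ldots,-1,0,1,\ldots,i-1\}$, which is also a set of $2i-1$ values; any fixed order-preserving bijection between these two $(2i-1)$-element sets applied coordinatewise does the job. Next, $\operatorname{DT}_n \cong \mmn$: I would build the matching recursively. Reading a Dumont trapezoidal word $w = w_1\cdots w_n$, interpret $w_i$ as instructions for where to insert the pair involving the ``new'' largest elements when growing a matching of $[2i-2]$ to one of $[2i]$ — concretely, $|w_i|$ selects which of the existing elements (or the gap) to pair the new element $2i$ with (or to insert $2i-1$), and the sign of $w_i$ records one of the two sub-choices, exactly matching the $2i-1$ insertion positions in the standard-form recursion $\#\mmn = (2n-1)\#\mm_{2n-2}$. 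I expect this to be the step requiring the most care: one must pin down the bookkeeping so that the resulting map is well defined into standard form and manifestly invertible.

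Finally, for $\operatorname{CQ}_n \cong \mmn$ I would compose: Theorem~\ref{thmCQ} already gives $\operatorname{CQ}_n \cong \mqn$ via $\Gamma$, and one could invoke the known bijection between Stirling permutations and perfect matchings; alternatively — and this is cleaner for the paper's spirit — I would give a direct recursive bijection $\operatorname{CQ}_n \to \mmn$ mirroring the two recursions side by side: appending the $2$-tuple $(a_{n-1},b_{n-1})$ to an $\operatorname{SP}$-code of length $n-1$ corresponds to inserting a new block into a matching of $[2n-2]$, where $a_{n-1}$ locates the block relative to the existing $2n-3$ ``active'' positions and $b_{n-1}\in\{1,2,3\}$ — hmm, but $a_{n-1}$ ranges over $i$ values and $b_{n-1}$ over $3$, giving $3i$ rather than $2i+1 = 2(i+1)-1$ choices at step $i+1$, so the naive product is wrong; the resolution is that in a valid $\operatorname{SP}$-code the constraint $(a_i,b_i)\ne(a_j,b_j)$ cuts this down, and the honest count must again be routed through $\#\mqn$. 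So the safest exposition is: (1) establish $\operatorname{RT}_n \cong \operatorname{DT}_n$ coordinatewise; (2) establish $\operatorname{DT}_n \cong \mmn$ by the recursive insertion bijection above; (3) deduce $\operatorname{CQ}_n \cong \mmn$ from Theorem~\ref{thmCQ} together with the classical Stirling-permutation/perfect-matching correspondence, or from a direct recursive matching of the growth rules. The main obstacle is getting the insertion conventions in step (2) precisely right so that the map lands in standard form and is verifiably reversible; everything else is routine bookkeeping and a cardinality check.
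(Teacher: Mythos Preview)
Your plan is sound and would yield a correct proof, but it diverges from the paper in how the $\operatorname{CQ}_n$ link is handled. For $\operatorname{DT}_n\cong\operatorname{RT}_n$ you and the paper agree: a coordinatewise bijection between the two $(2i-1)$-element alphabets (the paper writes it explicitly as $0\mapsto 1$, $k\mapsto 2k$, $\overline{k}\mapsto 2k+1$). For the matching step the paper builds $\varphi_2:\operatorname{RT}_n\to\mmn$ recursively by reading $t_{m+1}=i$ as the instruction ``create the block $(i,2m+2)$ and relabel the old matching onto $[2m+2]\setminus\{i,2m+2\}$ preserving order''; this is essentially your recursive insertion idea, just run on Riordan words rather than Dumont words.

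The substantive difference is the $\operatorname{CQ}_n$ leg. You propose to route through Theorem~\ref{thmCQ} and a known Stirling-permutation/matching correspondence, which is legitimate but outsources the content. The paper instead constructs a new, direct bijection $\varphi_3:\operatorname{DT}_n\to\operatorname{CQ}_n$: at step $m+1$, the letter $w_{m+1}$ is decoded as $(a_m,b_m)$ according to whether $w_{m+1}$ repeats an earlier value (then $b_m=1$ and $a_m$ is the last position where that value occurred), is a fresh negative $-j$ (then $(a_m,b_m)=(j,2)$), or is a fresh positive $j$ (then $(a_m,b_m)=(j,3)$). This sidesteps exactly the counting mismatch you ran into (the $3i$ versus $2i-1$ issue): the three $b$-values are matched not to three equal-sized bins but to the trichotomy ``repeat / new negative / new positive'', whose sizes at step $m+1$ depend on the history and together total $2m+1$. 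Your approach is more economical in citations; the paper's $\varphi_3$ is self-contained and gives an explicit new dictionary between trapezoidal words and $\operatorname{SP}$-codes that can be read off letter by letter. The preliminary cardinality check you sketch is unnecessary in the paper's treatment, since each $\varphi_i$ is visibly invertible.
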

\begin{proof}
\quad $(i)$
Let $\varphi_1: \operatorname{DT}_n\rightarrow\operatorname{RT}_n$
be the bijection defined by
\begin{equation}
\varphi_1(w_i)=\left\{
               \begin{array}{ll}
                 1, & \hbox{if $w_i=0$;} \\
                 2k, & \hbox{if $w_i=k>0$;} \\
                 2k+1, & \hbox{if $w_i=\overline{k}<0$,}
               \end{array}
             \right.
\end{equation}
which yields that $\operatorname{DT}_n\cong\operatorname{RT}_n$.

\quad $(ii)$
Now we start to construct a bijection, denoted by $\varphi_2$, from $\operatorname{RT}_n$ to $\mmn$.
When $n=1$, we have $\operatorname{RT}_n=\{1\}$. Set $\varphi_2(\textbf{1})=(\textbf{1},2)$. When $n=2$, we have
$\operatorname{RT}_2=\{11,12,13\}$. We set
$$\varphi_2(1\textbf{1})=(\textbf{1},4)(2,3),~\varphi_2(1\textbf{2})=(\textbf{2},4)(1,3),~
\varphi_2(1\textbf{3})=(\textbf{3},4)(1,2).$$ We proceed by induction.
Let $n=m$. Suppose that $\varphi_2$ is a bijection from $\operatorname{RT}_m$ to $\mm_{2m}$.
Given $\m=(i_1,j_1)(i_2,j_2)\cdots (i_m,j_m)\in \mm_{2m}$. Suppose that $\varphi_2(t)=\m$, where $t=t_1t_2\cdots t_m\in \operatorname{RT}_m$.
For $1\leqslant i\leqslant 2m+1$, we let $\varphi_2$ be the following algorithm, which can be used to generate perfect matchings in $\mm_{2m+2}$:
 \begin{itemize}
   \item $\varphi_2(t_1t_2\cdots t_m i)=(i,2m+2)(i_1',j_1')(i_2',j_2')\cdots (i_m',j_m')$, where $(i_1',j_1')(i_2',j_2')\cdots (i_m',j_m')$ is a perfect matching
of the elements in $[2m+2]-\{i,2m+2\}$ such that the elements in $(i_1',j_1')\cdots (i_m',j_m')$ keeps the same order relationships they have in
$(i_1,j_1)\cdots (i_m,j_m)$.
 \end{itemize}
Clearly, $\varphi_2$ is the desired bijection. See Example~\ref{ExMat} for an illustration.

\quad $(iii)$
Now we start to construct a bijection, denoted by $\varphi_3$, from $\operatorname{DT}_n$ to $\operatorname{CQ}_n$.
When $n=1$, we set $\varphi_3(0)=(0,0)$.
When $n=m$, suppose $\varphi_3$ is a bijection from $\operatorname{DT}_m$ to $\operatorname{CQ}_m$.
Consider the case $n=m+1$. Let $w=w_1w_2\cdots w_{m+1}\in \operatorname{DT}_{m+1}$. Then $w'=w_1w_2\cdots w_{m}\in \operatorname{DT}_{m}$ and
$\varphi_3(w')=((0,0),(a_1,b_1),(a_2,b_2)\ldots,(a_{m-1},b_{m-1}))\in \operatorname{CQ}_m$.
We distinguish three cases:
\begin{enumerate}
\item [$(c_1)$] $w_{m+1}=k$ and $k\in \{w_1,w_2,\ldots,w_m\}$ if and only if $$(a_m,b_m)=(j,1),~{\text{where}}~j=\max\{i\mid w_i=k,~1\leqslant i\leqslant m\};$$
\item [$(c_2)$] $w_{m+1}=-j$ and $j\notin \{w_1,w_2,\ldots,w_m\}$ if and only if $(a_m,b_m)=(j,2)$, where $1\leqslant j\leqslant m$;
\item [$(c_3)$] $w_{m+1}=j$ and $j\notin \{w_1,w_2,\ldots,w_m\}$ if and only if $(a_m,b_m)=(j,3)$, where $1\leqslant j\leqslant m$.
 \end{enumerate}
It is routine to check that $\varphi_3$ is the desired bijection. In particular, when $n=2,3$, we have
\begin{align*}
\varphi_3(00)&=(0,0)(1,1),~
\varphi_3(0\overline{1})=(0,0)(1,2),~
\varphi_3(01)=(0,0)(1,3);\\
\varphi_3(000)&=(0,0)(1,1)(2,1),~\varphi_3(00\overline{1})=(0,0)(1,1)(1,2),~\varphi_3(00{1})=(0,0)(1,1)(1,3),\\
\varphi_3(00\overline{2})&=(0,0)(1,1)(2,2),~\varphi_3(002)=(0,0)(1,1)(2,3),~\varphi_3(0\overline{1}0)=(0,0)(1,2)(1,1),\\
\varphi_3(0\overline{1}1)&=(0,0)(1,2)(1,3),~\varphi_3(0\overline{1}~\overline{1})=(0,0)(1,2)(2,1),~\varphi_3(0\overline{1}~\overline{2})=(0,0)(1,2)(2,2),\\
\varphi_3(0\overline{1}2)&=(0,0)(1,2)(2,3),~\varphi_3(010)=(0,0)(1,3)(1,1),~\varphi_3(01\overline{1})=(0,0)(1,3)(1,2),\\
\varphi_3(011)&=(0,0)(1,3)(2,1),~\varphi_3(01\overline{2})=(0,0)(1,3)(2,2),~\varphi_3(012)=(0,0)(1,3)(2,3).
\end{align*}
This completes the proof.
\end{proof}

As illustrations of $\varphi_2(w)$ and $\varphi_3(w)$, we give another two examples.
\begin{example}\label{ExMat}
Given $t=1\operatorname{-}1\operatorname{-}1\operatorname{-}3\operatorname{-}2\operatorname{-}10$.
We give the procedure of creating $\varphi_2(t)$.
\begin{align*}
\textbf{1}&\Leftrightarrow (1,2),\\
1\operatorname{-}\textbf{1}&\Leftrightarrow (\textbf{1},4)(2,3),\\
1\operatorname{-}{1}\operatorname{-}\textbf{1}&\Leftrightarrow (\textbf{1},6)(2,5)(3,4),\\
1\operatorname{-}{1}\operatorname{-}{1}\operatorname{-}\textbf{3}&\Leftrightarrow (\textbf{3},8)(1,7)(2,6)(4,5),\\
1\operatorname{-}{1}\operatorname{-}{1}\operatorname{-}{3}\operatorname{-}{\textbf{2}}&\Leftrightarrow (\textbf{2},10)(4,9)(1,8)(3,7)(5,6),\\
1\operatorname{-}1\operatorname{-}1\operatorname{-}3\operatorname{-}2\operatorname{-}\textbf{10}&\Leftrightarrow (\textbf{10},12)(2,11)(4,9)(1,8)(3,7)(5,6).
\end{align*}
Thus $\varphi_2(t)=({10},12)(2,11)(4,9)(1,8)(3,7)(5,6)$. Conversely, we get $\varphi_2^{-1}(\varphi_2(t))=t$.
\end{example}

\begin{example}\label{ExDT}
Given $w=0\operatorname{-}0\operatorname{-}0\operatorname{-}\overline{1}\operatorname{-}1\operatorname{-}5\operatorname{-}1$.
We give the procedure of creating $\varphi_3(w)$.
\begin{align*}
\textbf{0}&\Leftrightarrow (0,0),\\
0\operatorname{-}\textbf{0}&\Leftrightarrow (0,0)(\textbf{1},\textbf{1}),\\
0\operatorname{-}{0}\operatorname{-}\textbf{0}&\Leftrightarrow (0,0)(1,1)(\textbf{2},\textbf{1}),\\
0\operatorname{-}0\operatorname{-}0\operatorname{-}\overline{\textbf{1}}&\Leftrightarrow (0,0)(1,1)({2},{1})(\textbf{1},\textbf{2}),\\
0\operatorname{-}0\operatorname{-}0\operatorname{-}\overline{1}\operatorname{-}\textbf{1}&\Leftrightarrow (0,0)(1,1)({2},{1})({1},2)(\textbf{1},\textbf{3}),\\
0\operatorname{-}0\operatorname{-}0\operatorname{-}\overline{1}\operatorname{-}1\operatorname{-}5&\Leftrightarrow(0,0)(1,1)({2},{1})({1},2)({1},{3})(\textbf{5},\textbf{3}),\\
0\operatorname{-}0\operatorname{-}0\operatorname{-}\overline{1}\operatorname{-}1\operatorname{-}5\operatorname{-}\textbf{1}
&\Leftrightarrow(0,0)(1,1)({2},{1})({1},2)({1},{3})({5},{3})(\textbf{5},\textbf{1}).
\end{align*}
Thus $\varphi_3(w)=(0,0)(1,1)({2},{1})({1},2)({1},{3})({5},{3})({5},{1})$. Conversely, we get $\varphi_3^{-1}(\varphi_3(w))=w$.
\end{example}
\section{The $e$-positivity of the enumerators by $(\lap,\eudd,\rpd)$}\label{section5}
\subsection{Preliminary}
\hspace*{\parindent}

Let $\operatorname{X}_n=\{x_1,x_2,\ldots,x_n\}$ be a set of commuting variables.
Define $$S_n(x)=\prod_{i=1}^n(x-x_i)=\sum_{k=0}^n{(-1)}^ke_{k}x^{n-k}.$$
Then the {\it $k$-th
elementary symmetric function} associated with $\operatorname{X}_n$ is defined by $$e_k=\sum_{1\leqslant i_1<i_2<\cdots<i_k\leqslant n}x_{i_1}x_{i_2}\cdots x_{i_k}.$$
In particular, $$e_0=1,~e_1=\sum_{i=1}^nx_i,~e_n=x_1x_2\cdots x_n.$$
A function $f(x_1,x_2,\ldots)\in \mathbb{R}[x_1,x_2,\ldots]$ is said to be {\it symmetric} if it is invariant under
any permutation of its indeterminates.
We say that a symmetric function is {\it $e$-positive} if it can be written as a nonnegative linear combination of elementary symmetric
functions.

For an alphabet $A$, let $\mathbb{Q}[[A]]$ be the rational commutative ring of formal power
series in monomials formed from letters in $A$. Following Chen~\cite{Chen93}, a {\it context-free grammar} over
$A$ is a function $G: A\rightarrow \mathbb{Q}[[A]]$ that replaces each letter in $A$ by a formal function over $A$.
The formal derivative $D_G$ with respect to $G$ satisfies the derivation rules:
$$D_G(u+v)=D_G(u)+D_G(v),~D_G(uv)=D_G(u)v+uD_G(v).$$
So the Leibniz rule holds:
$$D_G^n(uv)=\sum_{k=0}^n\binom{n}{k}D_G^k(u)D_G^{n-k}(v).$$
See~\cite{Dumont96,Ma1902} for some examples of context-free grammars.

Recently, two methods are developed in the theory of context-free grammars, i.e., grammatical labeling and the change of grammars.
A {\it grammatical labeling} is an assignment of the underlying elements of a combinatorial structure
with variables, which is consistent with the substitution rules of a grammar (see~\cite{Chen17} for details).
A {\it change of grammars} is a substitution method in which the original grammars are replaced with functions of other grammars.
In particular, the following type of change of grammars can be used to study the $\gamma$-positivity and partial $\gamma$-positivity of several enumerative polynomials (see~\cite{Chen21,Ma1902,Ma2021} for details):
\begin{equation*}\label{change-grammars01}
\left\{
  \begin{array}{ll}
    u=xy, &  \\
    v=x+y. &
  \end{array}
\right.
\end{equation*}

Let $G$ be the following grammar
\begin{equation}\label{xyz}
G=\{x \rightarrow xyz, y\rightarrow xyz, z\rightarrow xyz\}.
\end{equation}
It has been shown by Dumont~\cite{Dumont80}, Haglund-Visontai~\cite{Haglund12} and Chen etal.~\cite{Chen2102} that
$$D_G^n(x)=C_n(x,y,z).$$

Very recently, Chen-Fu~\cite{Chen21} introduced a new type of change of grammars:
\begin{equation}\label{change-grammars02}
\left\{
  \begin{array}{ll}
    u=x+y+z, &  \\
    v=xy+yz+zx, &\\
    w=xyz.
  \end{array}
\right.
\end{equation}
Combining~\eqref{xyz} and~\eqref{change-grammars02},
one can easily verify that
$D_{G}(u)=3w,~D_{{G}}(v)=2uw,~D_{G}(w)=vw$, which yield a new grammar
\begin{equation}\label{G8def}
H=\{u\rightarrow 3w, v\rightarrow 2uw,~w\rightarrow vw\}.
\end{equation}
For any $n\geqslant 1$, Chen-Fu~\cite[]{Chen21} discovered that
\begin{equation}\label{Chen22}
C_n(x,y,z)=D_G^n(x)=D_H^{n-1}(w)=\sum_{i+2j+3k=2n+1}\gamma_{n,i,j,k}u^iv^jw^k,
\end{equation}
which leads to the $e$-positive expansion~\eqref{Cnxyz}.

We can now present the main result of this section.
\begin{theorem}\label{mainthm03}
Let
\begin{equation}\label{Nnxyz01}
N_n(x,y,z)=\sum_{\sigma\in\mqn}x^{\lap(\sigma)}y^{\eudd(\sigma)}z^{\rpd(\sigma)}.
\end{equation}
Then we have
\begin{equation}\label{Nnxyz02}
\begin{split}
N_n(x,y,z)&=\sum_{i+2j+3k=2n+1}3^i\gamma_{n,i,j,k}(x+y+z)^{j}(xyz)^k,
\end{split}
\end{equation}
where $\gamma_{n,i,j,k}$ is the same as in~\eqref{Chen22}, i.e., $\gamma_{n,i,j,k}$ equals the number of 0-1-2-3 increasing plane trees
on $[n]$ with $k$ leaves, $j$ degree one vertices and $i$ degree two vertices.
\end{theorem}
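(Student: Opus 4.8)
The plan is to pass to ternary increasing trees via the bijection $\phi$ and read the three statistics off Table~\ref{Table1}. For $\sigma\in\mqn$ let $T=\phi^{-1}(\sigma)$ be the corresponding (simplified) ternary increasing tree on $[n]$; recall that the $\operatorname{SP}$-code is the build-tree code, so $(v,1),(v,2),(v,3)$ occur in it exactly when $v$ has a left, a middle, a right child, respectively. By Table~\ref{Table1}, $\operatorname{Lap}(\sigma)$ is the set of vertices of $T$ with neither a left nor a middle child, $\operatorname{Eud}(\sigma)$ is the set of vertices with neither a left nor a right child, and $\operatorname{Rpd}(\sigma)$ is the set of vertices with neither a middle nor a right child. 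Hence $\lap,\eudd,\rpd$ become tree statistics, and it suffices to evaluate $\sum_{T}x^{\lap}y^{\eudd}z^{\rpd}$ over all ternary increasing trees on $[n]$.

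First I would sort the vertices of $T$ by degree (number of children). A leaf lies in all three of the sets above; a vertex of degree $3$ lies in none; a vertex of degree $2$ lies in none whichever two of its three slots are used (a three-line check over $\{L,M\},\{L,R\},\{M,R\}$); and a vertex of degree $1$ lies in exactly one of them, namely $\operatorname{Lap}$ if its child is a right child, $\operatorname{Eud}$ if a middle child, $\operatorname{Rpd}$ if a left child. Writing $\ell_0(T)$ for the number of leaves of $T$, this gives
$$x^{\lap(\sigma)}y^{\eudd(\sigma)}z^{\rpd(\sigma)}=(xyz)^{\ell_0(T)}\prod_{v:\ \deg v=1}w_T(v),\qquad w_T(v)\in\{x,y,z\}\text{ recording the slot of the child of }v.$$

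Next I would sum over the fibers of the forgetful map $\psi$ sending $T$ to the $0$-$1$-$2$-$3$ increasing plane tree obtained by discarding the left/middle/right labels but keeping the left-to-right order of children. If $\tau$ has $d_r$ vertices of degree $r$, then $|\psi^{-1}(\tau)|=\prod_v\binom{3}{\deg v}=3^{d_1+d_2}$, since each vertex's children are placed into the slots by an order-preserving injection chosen independently. Summing the displayed weight over $T\in\psi^{-1}(\tau)$: the degree-$0$ vertices produce $(xyz)^{d_0}$, each degree-$1$ vertex produces $x+y+z$ (one summand per admissible slot), each degree-$2$ vertex produces the scalar $3$ (three placements, weight $1$ each), and degree-$3$ vertices produce $1$, whence
$$\sum_{T\in\psi^{-1}(\tau)}x^{\lap}y^{\eudd}z^{\rpd}=3^{d_2}(x+y+z)^{d_1}(xyz)^{d_0}.$$
Summing over all $\tau$ and grouping by $(d_2,d_1,d_0)=(i,j,k)$, the number of such $\tau$ with given $(i,j,k)$ is $\gamma_{n,i,j,k}$ by the description recorded with~\eqref{Chen22}, and $i+2j+3k=2n+1$ for each such $\tau$ (from $\sum_r d_r=n$ and $\sum_r r\,d_r=n-1$); this is exactly~\eqref{Nnxyz02}.

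I expect the degree-based classification of the second step to be the real content, in particular the fact that a degree-$2$ vertex is counted by none of $\lap,\eudd,\rpd$: this is what makes its contribution the pure scalar $3$ and hence produces the factor $3^i$ rather than a polynomial in $x,y,z$. As an alternative one could avoid the fiber count and instead verify that $\sum_{\tau}a_0^{d_0}a_1^{d_1}a_2^{d_2}a_3^{d_3}=D_{\mathcal G}^{\,n-1}(a_0)$ for the grammar $\mathcal G=\{a_0\to a_0a_1,\ a_1\to 2a_0a_2,\ a_2\to 3a_0a_3,\ a_3\to 0\}$ and then specialize $(a_0,a_1,a_2,a_3)=(xyz,\,x+y+z,\,3,\,1)$; but this still needs the identification of the resulting coefficients with $\gamma_{n,i,j,k}$, so the bijective route above seems the most economical.
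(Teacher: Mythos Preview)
Your argument is correct. The vertex-by-degree analysis is exactly right: via Table~\ref{Table1}, a leaf lies in all three of $\operatorname{Lap},\operatorname{Eud},\operatorname{Rpd}$, a degree-$1$ vertex in exactly one (according to the slot of its child), and vertices of degree $2$ or $3$ in none; the fiber sum over the forgetful map $\psi$ then yields $3^{d_2}(x+y+z)^{d_1}(xyz)^{d_0}$, and summing over $0$-$1$-$2$-$3$ increasing plane trees recovers~\eqref{Nnxyz02} by the stated interpretation of $\gamma_{n,i,j,k}$. The paper reaches the same conclusion by a different route: it packages the same degree analysis into a context-free grammar $I=\{p_3\to p_1p_3,\ p_1\to 6p_2p_3,\ p_2\to p_3\}$ (Lemma~\ref{lemma18}), proves $N_n(x,y,z)=D_I^{\,n-1}(p_3)\big|_{p_1=x+y+z,\,p_2=1,\,p_3=xyz}$, and then performs the change of grammar $w=p_3$, $v=p_1$, $u=3p_2$ to identify $D_I^{\,n-1}(p_3)$ with the Chen--Fu expansion $D_H^{\,n-1}(w)=\sum\gamma_{n,i,j,k}u^iv^jw^k$. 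Your approach is more elementary and self-contained: it bypasses the grammar formalism and the change of variables entirely, needing only the combinatorial description of $\gamma_{n,i,j,k}$. The paper's approach, on the other hand, makes the link to~\eqref{Chen22} structural (the same grammar $H$ governs both $C_n$ and $N_n$ after substitution), which explains at once why the \emph{same} coefficients $\gamma_{n,i,j,k}$ appear in both expansions. Your closing remark about the alternative grammar $\mathcal G$ is in fact close in spirit to what the paper does.
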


Throughout this section, we always let $$w_1=x+y+z,~w_2=xy+yz+zx,~w_3=xyz.$$
Below are the polynomials $N_n(x,y,z)$ for $n\leqslant 6$:
\begin{align*}
N_1(x,y,z)&=w_3,\\
N_2(x,y,z)&=w_1w_3,\\
N_{3}(x,y,z)&=w_1^2w_3+6w_3^2,\\
N_{4}(x,y,z)&=w_1^3w_3+24w_1w_3^2+6w_3^3,\\
N_{5}(x,y,z)&=w_1^4w_3+66w_1^2w_3^2+42w_1w_3^3+144 w_3^3,\\
N_{6}(x,y,z)&= w_1^5w_3+156w_1^3w_3^2+192w_1^2w_3^3+1224w_1w_3^3+540 w_3^4.
\end{align*}

\begin{example}
For the elements in $\mq_2$, we have
\begin{equation*}
\begin{split}
\lap(1122)&=\lap(011220)=2,~\eudd(1122)=\eudd(011220)=1,~\rpd(1122)=\rpd(011220)=1,\\
\lap(1221)&=\lap(012210)=1,~\eudd(1221)=\eudd(012210)=2,~\rpd(1221)=\rpd(012210)=1,\\
\lap(2211)&=\lap(022110)=1,~\eudd(2211)=\eudd(022110)=1,~\rpd(2211)=\rpd(022110)=2.
\end{split}
\end{equation*}
Thus $N_2(x,y,z)=xyz(x+y+z)=w_1w_3$. See Figure~\ref{Fig003} for an illustration.
\end{example}
\subsection{Proof of Theorem~\ref{mainthm03}}
\hspace*{\parindent}

As discussed in Section~\ref{section4}, we shall use simplified ternary increasing trees for convenience.
See Figure~\ref{Fig003} for an illustration, where the left figure represents the three different figures in the right.
The weights of $\sigma\in\mqn$ and $C_n\in\operatorname{CQ}_n$ are respectively defined as follows:
$$E_1(\sigma)=x^{\lap(\sigma)y^{\eudd(\sigma)}}z^{\rpd(\sigma)},$$
\begin{equation}\label{Weight}
E_2(C_n)=x^{n-\#\{a_i\mid (a_i,1)~\text{or}~ (a_i,2)\in C_n\}}y^{n-\#\{a_i\mid (a_i,1)~\text{or}~ (a_i,3)\in C_n\}}z^{n-\#\{a_i\mid (a_i,2)~\text{or}~ (a_i,3)\in C_n\}}.
\end{equation}
From Table~\ref{Table1}, we see that $E_1(\sigma)=E_2(C_n)$, where $C_n$ is the corresponding $\operatorname{SP}$-code of $\sigma$.
When $n=1$, the $\operatorname{SP}$-code $(0,0)$ corresponds to the Stirling permutation $11$. Clearly,
$$E_1(11)=E_2((0,0))=xyz=w_3.$$
When $n=2$, the weights of elements in $\mq_2$ and $\operatorname{CQ}_n$ are given as follows:
\begin{equation*}\label{w3}
{\underbrace{2211\leftrightarrow (0,0)(1,1)}_{xyz^2=w_3z},~\underbrace{1221\leftrightarrow (0,0)(1,2)}_{xy^2z=w_3y},~\underbrace{1122\leftrightarrow (0,0)(1,3)}_{x^2yz=w_3x}},
\end{equation*}
and the sum of weights is given by $w_3(x+y+z)=w_3w_1$.

\begin{figure}
\begin{center}
\begin{tikzpicture}
[emph/.style={edge from parent/.style={snakeline,draw}}]
\node (1) [circle,draw] {1}
    child [emph] {node (2) [circle,draw] {2}};
\path (1.east) node[above right] {[$p_1$]};
\path (2.east) node[above right] {[$p_3$]};
\end{tikzpicture}
=
\begin{tikzpicture}
[level 1/.style = {sibling distance = .7cm},
NONE/.style={edge from parent/.style={draw=none}}]
\node [circle,draw] {1}
    child {node [circle,draw] {2}}
    child [NONE] {}
    child [NONE] {};
\end{tikzpicture}
or
\begin{tikzpicture}
[level 1/.style = {sibling distance = .7cm},
NONE/.style={edge from parent/.style={draw=none}}]
\node [circle,draw] {1}
    child [NONE] {}
    child {node [circle,draw] {2}}
    child [NONE] {};
\end{tikzpicture}
or
\begin{tikzpicture}
[level 1/.style = {sibling distance = .7cm},
NONE/.style={edge from parent/.style={draw=none}}]
\node [circle,draw] {1}
    child [NONE] {}
    child [NONE] {}
    child {node [circle,draw] {2}};
\end{tikzpicture}
\caption{$N_2(x,y,z)=(x+y+z)xyz$.}
\label{Fig003}
\end{center}
\end{figure}
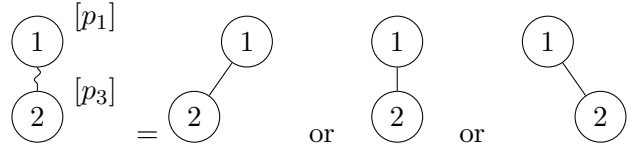

Given $C_n=(0,0)(a_1,b_1)(a_2,b_2)\cdots (a_{n-1},b_{n-1})\in \operatorname{CQ}_n$.
Consider the elements in $\operatorname{CQ}_{n+1}$ generated from $C_n$ by appending the $2$-tuples $(a_n,b_n)$, where $1\leqslant a_n\leqslant n$ and $1\leqslant b_n\leqslant 3$.
Let $T$ be the corresponding ternary increasing tree of $C_n$.
We can add $n+1$ to $T$ as a child of a vertex, which is not of degree three. Let $T'$ be the resulting ternary increasing tree.
We first give a labeling of $T$ as follows. Label a leaf by $p_3$, a degree
one vertex by $p_1$, a degree two vertex by $p_2$ and a degree three vertex by $1$.

\begin{figure}[ht]
\begin{center}
\begin{tikzpicture}
[emph/.style={edge from parent/.style={snakeline,draw}}]
\node (1) [circle,draw] {1}
    child [emph] {node (2) [circle,draw] {2}
    				child {node (3) [circle,draw] {3}}
    };
\path (1.east) node[above right] {[$p_1$]};
\path (2.east) node[above right] {[$p_1$]};
\path (3.east) node[above right] {[$p_3$]};
\end{tikzpicture}
+
\begin{tikzpicture}
[level 1/.style = {sibling distance = .7cm},
NONE/.style={edge from parent/.style={draw=none}}]
\node (1) [circle,draw] {1}
    child {node (2) [circle,draw] {2}}
    child {node (3) [circle,draw] {3}}
    child [NONE] {};
\path (1.east) node[above right] {[$p_2$]};
\path (2.west) node[above left] {[$p_3$]};
\path (3.east) node[above right] {[$p_3$]};
\end{tikzpicture}
+
\begin{tikzpicture}
[level 1/.style = {sibling distance = .7cm},
NONE/.style={edge from parent/.style={draw=none}}]
\node (1) [circle,draw] {1}
    child {node (2) [circle,draw] {3}}
    child {node (3) [circle,draw] {2}}
    child [NONE] {};
\path (1.east) node[above right] {[$p_2$]};
\path (2.west) node[above left] {[$p_3$]};
\path (3.east) node[above right] {[$p_3$]};
\end{tikzpicture}
+
\begin{tikzpicture}
[level 1/.style = {sibling distance = .7cm},
NONE/.style={edge from parent/.style={draw=none}}]
\node (1) [circle,draw] {1}
    child {node (2) [circle,draw] {2}}
    child [NONE] {}
    child {node (3) [circle,draw] {3}};
\path (1.east) node[above right] {[$p_2$]};
\path (2.west) node[above left] {[$p_3$]};
\path (3.east) node[above right] {[$p_3$]};
\end{tikzpicture}
+
\begin{tikzpicture}
[level 1/.style = {sibling distance = .7cm},
NONE/.style={edge from parent/.style={draw=none}}]
\node (1) [circle,draw] {1}
    child {node (2) [circle,draw] {3}}
    child [NONE] {}
    child {node (3) [circle,draw] {2}};
\path (1.east) node[above right] {[$p_2$]};
\path (2.west) node[above left] {[$p_3$]};
\path (3.east) node[above right] {[$p_3$]};
\end{tikzpicture}
+
\begin{tikzpicture}
[level 1/.style = {sibling distance = .7cm},
NONE/.style={edge from parent/.style={draw=none}}]
\node (1) [circle,draw] {1}
    child [NONE] {}
    child {node (2) [circle,draw] {2}}
    child {node (3) [circle,draw] {3}};
\path (1.east) node[above right] {[$p_2$]};
\path (2.west) node[above left] {[$p_3$]};
\path (3.east) node[above right] {[$p_3$]};
\end{tikzpicture}
+
\begin{tikzpicture}
[level 1/.style = {sibling distance = .7cm},
NONE/.style={edge from parent/.style={draw=none}}]
\node (1) [circle,draw] {1}
    child [NONE] {}
    child {node (2) [circle,draw] {3}}
    child {node (3) [circle,draw] {2}};
\path (1.east) node[above right] {[$p_2$]};
\path (2.west) node[above left] {[$p_3$]};
\path (3.east) node[above right] {[$p_3$]};
\end{tikzpicture}
\caption{$N_3(x,y,z)=(x+y+z)^2xyz+6(xyz)^2$}
\label{N3}
\end{center}
\end{figure}
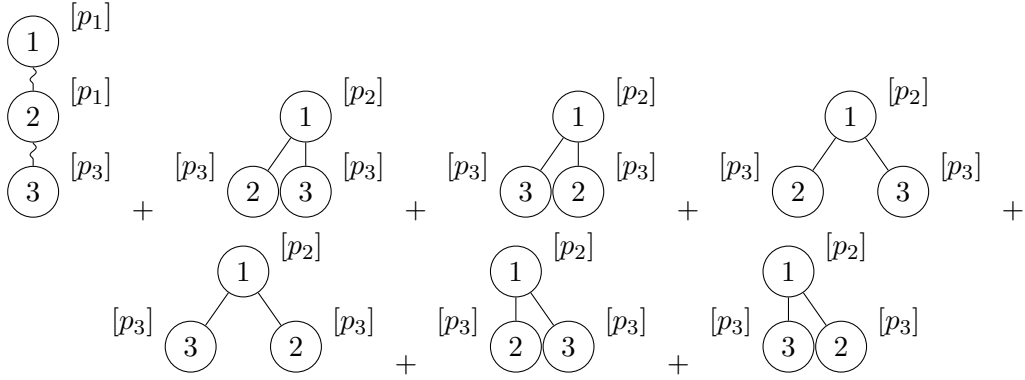
\begin{figure}[ht]
\begin{center}
\begin{tikzpicture}
[level distance = 1 cm,
NONE/.style={edge from parent/.style={draw=none}}]
\node (1) [circle,draw] {}
    child [NONE] {node [circle,draw=none] {}};
\path (1.east) node[above right] {[$p_3$]};
\end{tikzpicture}
, ~
\begin{tikzpicture}
[level distance = 1 cm,
emph/.style={edge from parent/.style={snakeline,draw}}]
\node (1) [circle,draw] {}
    child [emph] {node [circle,draw] {}};
\path (1.east) node[above right] {[$p_1$]};
\end{tikzpicture}
=
\begin{tikzpicture}
[level distance = 1 cm,
level 1/.style = {sibling distance = .7cm},
NONE/.style={edge from parent/.style={draw=none}}]
\node [circle,draw] {}
    child {node [circle,draw] {}}
    child [NONE] {}
    child [NONE] {};
\end{tikzpicture}
or
\begin{tikzpicture}
[level distance = 1 cm,
level 1/.style = {sibling distance = .7cm},
NONE/.style={edge from parent/.style={draw=none}}]
\node [circle,draw] {}
    child [NONE] {}
    child {node [circle,draw] {}}
    child [NONE] {};
\end{tikzpicture}
or
\begin{tikzpicture}
[level distance = 1 cm,
level 1/.style = {sibling distance = .7cm},
NONE/.style={edge from parent/.style={draw=none}}]
\node [circle,draw] {}
    child [NONE] {}
    child [NONE] {}
    child {node [circle,draw] {}};
\end{tikzpicture}
\\

\begin{tikzpicture}
[level distance = 1 cm,
level 1/.style = {sibling distance = .7cm},
NONE/.style={edge from parent/.style={draw=none}}]
\node (1) [circle,draw] {}
    child {node [circle,draw] {}}
    child {node [circle,draw] {}}
    child [NONE] {};
\path (1.east) node[above right] {[$p_2$]};
\end{tikzpicture}
, ~
\begin{tikzpicture}
[level distance = 1 cm,
level 1/.style = {sibling distance = .7cm},
NONE/.style={edge from parent/.style={draw=none}}]
\node (1) [circle,draw] {}
    child [NONE] {}
    child {node [circle,draw] {}}
    child {node [circle,draw] {}};
\path (1.east) node[above right] {[$p_2$]};
\end{tikzpicture}
, ~
\begin{tikzpicture}
[level distance = 1 cm,
level 1/.style = {sibling distance = .7cm},
NONE/.style={edge from parent/.style={draw=none}}]
\node (1) [circle,draw] {}
    child {node [circle,draw] {}}
    child [NONE] {}
    child {node [circle,draw] {}};
\path (1.east) node[above right] {[$p_2$]};
\end{tikzpicture}
, ~
\begin{tikzpicture}
[level distance = 1 cm,
level 1/.style = {sibling distance = .7cm}]
\node (1) [circle,draw] {}
    child {node [circle,draw] {}}
    child {node [circle,draw] {}}
    child {node [circle,draw] {}};
\path (1.east) node[above right] {[$1$]};
\end{tikzpicture}
\end{center}
\caption{Labeling schemes of simplified ternary increasing trees.}
\label{Fig005}
\end{figure}
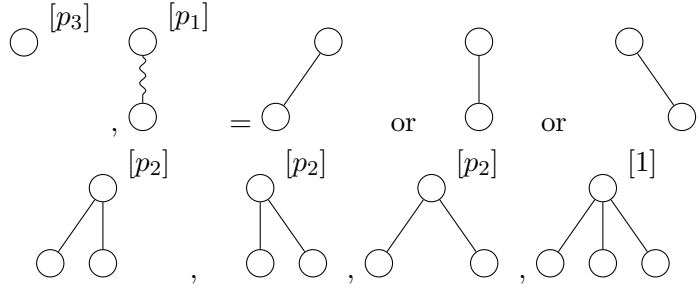

The $2$-tuples $(a_n,b_n)$ can be divided into three classes:
\begin{itemize}
  \item if $a_n\neq a_i$ for all $1\leqslant i\leqslant n-1$, then we must add $n+1$ to a leaf of $T$.
 This operation corresponds to the change of weights
  \begin{equation}\label{G1}
 E_2(C_n)\rightarrow E_2(C_{n+1})=E_2(C_n)(x+y+z),
  \end{equation}
which yields the substitution $p_3\rightarrow p_1p_3$, see Figure~\ref{Fig003} and the first case in Figure~\ref{N3} for illustrations.
Thus the contribution of any leaf to the weight is $xyz$ and that of a degree one vertex is $x+y+z$ (represents this vertex may has a left, middle or right child).
When we compute the corresponding enumerative polynomials of Stirling permutations, it follows from~\eqref{G1} that we need to set
\begin{equation}\label{G2}
p_1=x+y+z,~p_3=xyz;
\end{equation}
  \item if there is exactly one 2-tuple $(a_i,b_i)$ in $C_n$ such that $a_n=a_i$, then we must add $n+1$ to $T$ as a child of the node $a_i$.
  Note that the node $a_i$ already has one child $i+1$, and $n+1$ becomes the second child of $a_i$. There are six cases to add $n+1$.
As illustrations, the last six cases in Figure~\ref{N3} are the total possibilities when we add $3$ to the simplified ternary increasing trees in Figure~\ref{Fig003} as the second child of the node $1$.
This operation corresponds to the substitution $p_1\rightarrow 6p_2p_3$.
From~\eqref{Weight}, we see that each degree two vertex makes no contribution to the weight. Thus we need to set $p_2=1$ when we compute the corresponding enumerative polynomial of the joint distribution of $(\lap,\eudd,\rpd)$;
  \item if there are exactly two 2-tuples $(a_i,b_i)$ and $(a_j,b_j)$ in $C_n$ such that $a_n=a_i=a_j$ and $i<j$, then we must add $n+1$ to $T$ as the third child of $a_i$, and $n+1$ becomes a leaf with label $p_3$. This operation corresponds to the substitution $p_2\rightarrow p_3$. From~\eqref{Weight},
   we see that each degree three vertex makes
no contribution to the weight, and so we label all degree three vertices by $1$.
\end{itemize}
The aforementioned three cases exhaust all the possibilities to construct a $\operatorname{SP}$-code of length $n+1$
from a $\operatorname{SP}$-code of length $n$ by appending $2$-tuples $(a_n,b_n)$. In conclusion, each case corresponds to an application of a
substitution rule in the following grammar:
\begin{equation}\label{Iw3}
I=\{p_3\rightarrow p_1p_3,~p_1\rightarrow 6p_2p_3,~p_2\rightarrow p_3\},
\end{equation}
and the corresponding labeling schemes are illustrated in Figure~\ref{Fig005}.

We can now conclude the following lemma.
\begin{lemma}\label{lemma18}
Let $I$ be the context-free grammar given by~\eqref{Iw3}. For any $n\geqslant 1$, we have $$D_I^{n-1}(p_3)\mid_{p_1=x+y+z,p_2=1,p_3=xyz}=N_n(x,y,z).$$
In particular, $D_I(p_3)=p_1p_3$, $D_I^2(p_3)=p_1^2p_3+6p_2p_3^2$ and $D_I^3(p_3)=p_1^3p_3+24p_1p_2p_3^2+6p_3^3$.
\end{lemma}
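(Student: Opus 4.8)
The plan is to read off the identity from the recursive construction of $\operatorname{SP}$-codes developed above, packaged as a grammatical labeling of ternary increasing trees, and to close the argument by induction on $n$; the explicit values of $D_I(p_3)$, $D_I^2(p_3)$ and $D_I^3(p_3)$ are then a direct computation with the grammar $I$ of~\eqref{Iw3}.

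First I would fix the combinatorial model that $I$ encodes: ternary increasing trees in which the left/middle/right slot of a vertex is recorded only at vertices of degree at least two (equivalently, the quotient of ordinary ternary increasing trees obtained by forgetting the slot of every vertex with at most one child; see Figure~\ref{Fig005}). To such a model-tree $T$ of order $n$ I attach the monomial $\operatorname{wt}(T)=\prod_v\ell(v)$, the product over the vertices of $T$, where $\ell(v)$ equals $p_3$, $p_1$, $p_2$ or $1$ according as $v$ has $0$, $1$, $2$ or $3$ children. Using the definition of $E_2$ in~\eqref{Weight} and Table~\ref{Table1}, I would check vertex by vertex that, under $p_1=x+y+z$, $p_2=1$, $p_3=xyz$, the monomial $\operatorname{wt}(T)$ expands precisely into $\sum_{C}E_2(C)$, the sum over the $\operatorname{SP}$-codes $C\in\operatorname{CQ}_n$ refining $T$: a leaf contributes $xyz$, a one-child vertex contributes $x+y+z$ (its left, middle and right refinements contributing $z$, $y$ and $x$), and a vertex of degree at least two contributes $1$. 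Since each $\operatorname{SP}$-code refines a unique model-tree, once the grammar identity below is in hand this yields
\[
D_I^{\,n-1}(p_3)\big|_{p_1=x+y+z,\ p_2=1,\ p_3=xyz}=\sum_{C\in\operatorname{CQ}_n}E_2(C)=\sum_{\sigma\in\mqn}E_1(\sigma)=N_n(x,y,z).
\]

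Then I would prove the grammar identity $D_I^{\,n-1}(p_3)=\sum_{T}\operatorname{wt}(T)$, the sum over all order-$n$ model-trees, by induction on $n$. The base case $n=1$ is clear: the only model-tree is the single leaf root, with $\operatorname{wt}=p_3=D_I^{0}(p_3)$. For the step, every order-$(n+1)$ model-tree is obtained from a unique order-$n$ model-tree by attaching the new vertex $n+1$ as a child of some vertex $v$ of degree at most two, and the three-case analysis already carried out shows that this attachment changes $\ell(v)$ exactly by one production of $I$: $p_3\to p_1p_3$ when $v$ was a leaf; $p_1\to 6p_2p_3$ when $v$ had one child (six ways, from the three choices of the slot left empty together with the two ways of placing the former child and $n+1$ in the two occupied slots); and $p_2\to p_3$ when $v$ had two children. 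Summing these vertexwise contributions is exactly the action of $D_I$ via the Leibniz rule, which proves the claim; evaluating $D_I$, $D_I^2$ and $D_I^3$ on $p_3$ then produces the three displayed polynomials.

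The step that needs the most care is the coefficient $6$ in $p_1\to 6p_2p_3$ and the internal consistency of the model: that a one-child vertex becomes a two-child vertex in exactly six ways, that every order-$(n+1)$ model-tree has a well-defined order-$n$ predecessor and is generated exactly once (with that six-fold multiplicity correctly accounted for), and that deferring the slot of every degree-$\le 1$ vertex to the final specialization $p_1=x+y+z$ is legitimate and restores $E_2$. Once this book-keeping is pinned down, so that $\operatorname{wt}$ is well defined on the model and reproduces $E_2$ after the substitution, the induction and the derivative computations are routine.
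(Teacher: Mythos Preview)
Your proposal is correct and follows essentially the same route as the paper. The paper's argument for Lemma~\ref{lemma18} is precisely the discussion immediately preceding it: label each vertex of a simplified ternary increasing tree by $p_3,p_1,p_2,1$ according to its degree (with the slot of a degree-one child left unspecified, as in Figure~\ref{Fig005}), verify from~\eqref{Weight} that the specialization $p_1=x+y+z$, $p_2=1$, $p_3=xyz$ recovers $E_2$, and then run the three-case analysis of appending $(a_n,b_n)$ to an $\operatorname{SP}$-code to see that each case matches a production of $I$; your ``model-tree'' quotient and the explicit induction on $n$ make the same steps a bit more formal, and your bookkeeping of the factor $6$ (three choices for the empty slot times two placements of the old child and $n+1$) is exactly the paper's ``six cases'' illustrated in Figure~\ref{N3}.
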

\noindent{\bf A proof
Theorem~\ref{mainthm03}:}
\begin{proof}
Consider a change of the grammar $H$, which is given by~\eqref{G8def}.
Setting $w=p_3,v=p_1$ and $u=3p_2$, we get
$$D_H(p_3)=p_1p_3,~D_H(p_1)=6p_2p_3,D_H(p_2)=p_3,$$
which yield the grammar $I$.
It follows from~\eqref{Chen22} that
$$D_I^{n-1}(p_3)=D_H^{n-1}(w)\mid_{w=p_3,v=p_1,u=3p_2}=\sum_{i+2j+3k=2n+1}\gamma_{n,i,j,k}3^ip_2^ip_1^jp_3^k.$$
By~\eqref{G2} and Lemma~\ref{lemma18}, we obtain
$$N_n(x,y,z)=\sum_{i+2j+3k=2n+1}3^i\gamma_{n,i,j,k}(x+y+z)^j(xyz)^k.$$
This completes the proof of Theorem~\ref{mainthm03}.
\end{proof}
\section{The $e$-positivity of multivariate $k$-th order Eulerian polynomials}\label{section6}
A bivariate version of the Eulerian polynomial over the symmetric group is given as follows:
$$A_n(x,y)=\sum_{\pi\in\msn}x^{\asc(\pi)}y^{\des(\pi)}.$$
Clearly, $A_n(x,1)=A_n(1,x)=A_n(x)$.
Carlitz and Scoville~\cite{Carlitz74} showed that
\begin{equation*}\label{CarlitzSco74}
A_{n+1}(x,y)=xy\left(\frac{\partial}{\partial x}+\frac{\partial}{\partial y}\right)A_n(x,y),~A_1(x,y)=xy.
\end{equation*}
Foata and Sch\"utzenberger~\cite{Foata70} found that $A_n(x,y)$ has the gamma-expansion
\begin{equation*}\label{Anx-gamma}
A_n(x,y)=\sum_{k=1}^{\lrf{({n+1})/{2}}}\gamma(n,k)(xy)^k(x+y)^{n+1-2k},
\end{equation*}
where $\gamma(n,k)$ counts permutations in $\msn$ with $k$ descents, but with no double descents.

In this section, we always let $k$ be a given positive integer. A $k$-Stirling permutation of order $n$ is a multiset permutation of $\{1^k,2^k,\ldots,n^k\}$
with the property that all elements between two occurrences of $i$ are at least $i$, where $i\in [n]$, see~\cite{Lin21,Liu21,Yan2022} for the recent study on $k$-Stirling permutations and their variants.
Let $\mqn(k)$ be the set of $k$-Stirling permutations of order $n$. It is clear that $\mqn(1)=\msn,~\mqn(2)=\mqn$.

Let $\sigma\in\mqn(k)$.
The ascents, descents and plateaux of $\sigma$ of are defined as before, where we always set $\sigma_0=\sigma_{kn+1}=0$.
More precisely, an index $i$ is called an ascent (resp. descent, plateau) of $\sigma$ if $\sigma_i<\sigma_{i+1}$ (resp. $\sigma_i>\sigma_{i+1}$, $\sigma_i=\sigma_{i+1}$).
It is clear that $\asc(\sigma)+\des(\sigma)+\plat(\sigma)=kn+1$.
As a natural refinement of ascents, descents and plateaux, Janson-Kuba-Panholzer~{\cite{Janson11} introduced the following definition, and
related the distribution of $j$-ascents, $j$-descents and $j$-plateaux in $k$-Stirling permutations
with certain parameters in $(k+1)$-ary increasing trees.
\begin{definition}[{\cite{Janson11}}]\label{Lemma-Stirling}
An index $i$ is called a $j$-plateau (resp.~$j$-descent,~$j$-ascent) if $i$ is a plateau (resp.~descent,~ascent) and there are exactly $j-1$ indices $\ell<i$ such that
$a_{\ell}=a_i$.
\end{definition}

Let $\operatorname{plat}_j(\sigma)$ be the number of $j$-plateaux of $\sigma$. For $\sigma\in\mqn(k)$, it is clear that
$\operatorname{plat}_j(\sigma)\leqslant k-1$.
\begin{example}
Consider the $4$-Stirling permutation $\sigma=111223333221$.
The set of $1$-plateaux is given by $\{1,4,6\}$, the set of $2$-plateaux is given by $\{2,7\}$, and the set of $3$-plateaux is given by $\{8,10\}$.
Thus $\operatorname{plat}_1(\sigma)=3$ and $\operatorname{plat}_2(\sigma)=\operatorname{plat}_3(\sigma)=2$.
\end{example}

The {\it multivariate $k$-th order Eulerian polynomials} $C_n(x_1,\ldots,x_{k+1})$ are defined by
$$C_n(x_1,x_2,\ldots,x_{k+1})=\sum_{\sigma\in\mqn(k)}{x_1}^{\operatorname{plat}_1(\sigma)}{x_2}^{\operatorname{plat}_2(\sigma)}\cdots {x_{k-1}}^{\operatorname{plat}_{k-1}(\sigma)}{x_{k}}^{\operatorname{des}(\sigma)}{x_{k+1}}^{\operatorname{asc}(\sigma)}.$$
In particular, when $x_1=z,~x_2=\cdots=x_{k-1}=0$, $x_k=y$ and $x_{k+1}=x$, the polynomials $C_n(x_1,x_2,\ldots,x_{k+1})$ reduce to
$C_n(x,y,z)$; when $x_1=x_2=\cdots=x_{k-1}=0$, $x_k=1$ and $x_{k+1}=x$, the polynomials $C_n(x_1,x_2,\ldots,x_{k+1})$ reduce to the Eulerian polynomials
$A_n(x)$.

In the following, we always set $\operatorname{X}_{k+1}=\{x_1,x_2,\ldots,x_{k+1}\}$.
Let $e_i$ be the $i$-th
elementary symmetric function associated with $\operatorname{X}_{k+1}$. In particular, $$e_0=1,~e_1=x_1+x_2+\cdots+x_{k+1},~e_k=\sum_{i=1}^k\frac{e_{k+1}}{x_i},~e_{k+1}=x_1x_2\cdots x_{k+1}.$$
\begin{lemma}\label{Stirling}
Let
$G_1=\{x_1 \rightarrow e_{k+1},~ x_2\rightarrow e_{k+1},\ldots,~ x_{k+1}\rightarrow e_{k+1}\}$ be a grammar,
where $e_{k+1}=x_1x_2\cdots x_{k+1}$.
For $n\geqslant 1$, one has $D_{G_1}^n(x_1)=C_n(x_1,x_2,\ldots,x_{k+1})$.
\end{lemma}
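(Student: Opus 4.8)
The plan is to prove the identity by induction on $n$, via a grammatical labeling of $k$-Stirling permutations in the style of Section~\ref{section5}. The combinatorial backbone is the standard recursive construction of $\mqn(k)$: since $n+1$ is the largest letter, any two occurrences of $n+1$ in an element of $\mathcal{Q}_{n+1}(k)$ enclose only copies of $n+1$, so all $k$ copies of $n+1$ form a single consecutive block; deleting this block from $\sigma'\in\mathcal{Q}_{n+1}(k)$ leaves an element $\sigma\in\mqn(k)$, and conversely each $\sigma\in\mqn(k)$ together with a choice of one of its $kn+1$ slots (the indices $i\in\{0,1,\ldots,kn\}$, with $\sigma_0=\sigma_{kn+1}=0$) yields a unique $\sigma'\in\mathcal{Q}_{n+1}(k)$ by inserting the block $(n+1)^k$ into that slot. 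Thus $\mathcal{Q}_{n+1}(k)$ is produced from $\mqn(k)$ by $kn+1$ insertions.

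First I would attach to each slot $i$ of $\sigma\in\mqn(k)$ a label: $x_{k+1}$ if $i$ is an ascent, $x_k$ if $i$ is a descent, and $x_j$ if $i$ is a $j$-plateau. Since the letter sitting on a plateau has exactly $k$ copies, every plateau is a $j$-plateau for a unique $j\in\{1,\ldots,k-1\}$, so the product of the slot labels of $\sigma$ is precisely $x_1^{\operatorname{plat}_1(\sigma)}\cdots x_{k-1}^{\operatorname{plat}_{k-1}(\sigma)}x_k^{\des(\sigma)}x_{k+1}^{\asc(\sigma)}$; summing over $\mqn(k)$ recovers $C_n(x_1,\ldots,x_{k+1})$. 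The base case $n=1$ is the single permutation $1^k$, whose slots are one ascent, the $j$-plateaux for $j=1,\ldots,k-1$, and one descent, so its weight is $x_1x_2\cdots x_{k+1}=e_{k+1}=D_{G_1}(x_1)$.

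For the inductive step I would analyze one insertion. When the block $(n+1)^k$ is inserted into a slot of $\sigma$ carrying the label $x_m$, that one slot is replaced by $k+1$ new slots: an ascent just before the block (label $x_{k+1}$), the $j$-plateaux inside the block for $j=1,\ldots,k-1$ (labels $x_1,\ldots,x_{k-1}$), and a descent just after the block (label $x_k$), whose labels multiply to $e_{k+1}$; every other slot keeps its ascent/descent/plateau type and its number of earlier equal letters, hence keeps its label. So this insertion sends $\operatorname{wt}(\sigma)$ to $\operatorname{wt}(\sigma)\,e_{k+1}/x_m$. Summing over all $kn+1$ slots of $\sigma$ and using that the number of slots labeled $x_m$ equals the exponent of $x_m$ in $\operatorname{wt}(\sigma)$, the total weight of the permutations produced from $\sigma$ is $\sum_{m}(\text{exponent of }x_m)\,\operatorname{wt}(\sigma)\,e_{k+1}/x_m$, which is exactly $D_{G_1}(\operatorname{wt}(\sigma))$ by the definition of the formal derivative and the rule $x_m\to e_{k+1}$. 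Summing over $\sigma\in\mqn(k)$ and invoking the hypothesis $D_{G_1}^{n}(x_1)=C_n$ gives $C_{n+1}=D_{G_1}(C_n)=D_{G_1}^{n+1}(x_1)$.

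None of this is hard; the one point that needs genuine care is the verification that the slots created inside and around the inserted block contribute exactly the monomial $e_{k+1}$ — that is, one new ascent, one new descent, and one new $j$-plateau for each $j\le k-1$, with nothing left over — which is forced by the inserted letters being maximal and forming a maximal run of length $k$. Equivalently, one could transport the whole labeling to $(k+1)$-ary increasing trees through the bijection of Janson-Kuba-Panholzer, labeling each external node by the variable of the slot it represents; then inserting an internal node turns one external node into $k+1$ external children, replacing its label by $e_{k+1}$, and the computation is identical.
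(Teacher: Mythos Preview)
Your proof is correct and follows essentially the same approach as the paper's: both introduce the identical grammatical labeling of $\sigma\in\mqn(k)$ (ascents by $x_{k+1}$, descents by $x_k$, $j$-plateaux by $x_j$), verify the base case, and then argue by induction that inserting the maximal block $(n+1)^k$ into any slot realizes exactly the substitution rule $x_m\to e_{k+1}$ while leaving all other labels unchanged. Your write-up is a bit more explicit about why the sum over slots coincides with the formal derivative $D_{G_1}$, but the underlying argument is the same.
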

\begin{proof}
We shall show
that the grammar $G_1$ can be used to generate $k$-Stirling permutations.
We first introduce a grammatical labeling of $\sigma\in \mqn(k)$ as follows:
\begin{itemize}
  \item [\rm ($L_1$)]If $i$ is an ascent, then put a superscript label $x_{k+1}$ right after $\sigma_i$;
 \item [\rm ($L_2$)]If $i$ is a descent, then put a superscript label $x_k$ right after $\sigma_i$;
\item [\rm ($L_3$)]If $i$ is a $j$-plateau, then put a superscript label $x_j$ right after $\sigma_i$.
\end{itemize}
The weight of $\sigma$ is
defined as the product of the labels, that is $$w(\sigma)={x_1}^{\operatorname{plat}_1(\sigma)}{x_2}^{\operatorname{plat}_2(\sigma)}\cdots {x_{k-1}}^{\operatorname{plat}_{k-1}(\sigma)}{x_{k}}^{\operatorname{des}(\sigma)}{x_{k+1}}^{\operatorname{asc}(\sigma)}.$$
Recall that we always set $\sigma_0=\sigma_{kn+1}=0$.
Thus the index $0$ is always an ascent and the index $kn$ is always a descent.
Thus $\mq_1(k)=\{^{x_{k+1}}1^{x_1}1^{x_2}1^{x_3}\cdots 1^{x_k}\}$. There are $k+1$ elements in $\mq_2(k)$ and they can be labeled as follows, respectively:
$$^{x_{k+1}}1^{x_1}1^{x_2}\cdots 1^{x_{k-1}}1^{x_{k+1}}2^{x_1}2^{x_2}\cdots 2^{x_{k-1}}2^{x_k},$$
$$^{x_{k+1}}1^{x_1}1^{x_2}\cdots 1^{x_{k-2}}1^{x_{k+1}}2^{x_1}2^{x_2}\cdots 2^{x_{k-1}}2^{x_k}1^{x_k},~~\cdots$$
$$^{x_{k+1}}2^{x_1}2^{x_2}\cdots2^{x_{k-1}}2^{x_k}1^{x_1}1^{x_2}\cdots 1^{x_{k-1}}1^{x_k}.$$
Note that $D_{G_1}(x_1)=e_{k+1}$ and $D_{G_1}^2(x_1)=e_ke_{k+1}$.
Then the weight of the element in $\mq_1(k)$ is given by $D_{G_1}(x_1)$, and the sum of weights of the elements in $\mq_2(k)$ is given by $D_{G_1}^2(x)$.
Hence the result holds for $n=1,2$.
We proceed by induction on $n$.
Suppose we get all labeled permutations in $\mq_{n-1}(k)$, where $n\geqslant 3$. Let
$\sigma'$ be obtained from $\sigma\in \mq_{n-1}(k)$ by inserting the string $nn\cdots n$ with length $k$.
Then the changes of labeling are illustrated as follows:
$$\cdots\sigma_i^{x_j}\sigma_{i+1}\cdots\mapsto \cdots\sigma_i^{x_{k+1}}n^{x_1}n^{x_2}\cdots n^{x_k}\sigma_{i+1}\cdots;$$
$$\sigma^{x_k}\mapsto \sigma^{x_{k+1}}n^{x_1}n^{x_2}\cdots n^{x_k};~~\quad ^{x_{k+1}}\sigma \mapsto ^{x_{k+1}}n^{x_1}n^{x_2}\cdots n^{x_k}\sigma.$$
In each case, the insertion of the string $nn\cdots n$ corresponds to one substitution rule in $G_1$.
Then the action of $D_{G_1}$ on the set of weights of all elements in $\mq_{n-1}(k)$ gives the set of weights of all elements in $\mq_n(k)$.
Therefore, we get the desired description of $C_n(x_1,x_2,\ldots,x_{k+1})$.
\end{proof}

It should be noted that in~\cite{Janson11}, there is no explicit connection to the $k$-th order
Eulerian polynomials is brought up.
By combining an urn model for the exterior leaves of $(k+1)$-ary increasing trees and a bijection between $(k+1)$-ary increasing trees and $k$-Stirling permutations,
Janson-Kuba-Panholzer~\cite[Theorem~2,~Theorem~8]{Janson11} found that the variables in $C_n(x_1,x_2,\ldots,x_{k+1})$ are exchangeable.
We can now present the main result of this section.
\begin{theorem}\label{mainthm02}
For $n\geqslant 2$ and $k\geqslant n-2$, we have
\begin{equation}\label{main01}
C_n(x_1,x_2,\ldots,x_{k+1})=\sum \gamma(n;i_1,i_2,\ldots,i_n)e_{k-n+2}^{i_n}e_{k-n+3}^{i_{n-1}}\cdots e_k^{i_2} e_{k+1}^{i_1},
\end{equation}
where the summation is over all sequences $(i_1,i_2,\ldots,i_{n})$ of nonnegative integers such that
$i_1+i_2+\cdots+i_{n}=n$, $1\leqslant i_1\leqslant n-1$, $i_n=0$ or $i_n=1$. When $i_n=1$, one has $i_1=n-1$.
The coefficients $\gamma(n;i_1,i_2,\ldots,i_n)$ equal the numbers of 0-1-2-$\cdots$-k-(k+1) increasing plane
trees on $[n]$ with $i_j$ degree $j-1$ vertices for all $1\leqslant j\leqslant n$.
\end{theorem}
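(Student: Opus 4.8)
The plan is to follow the architecture of the proof of Theorem~\ref{mainthm03}: start from the grammar of Lemma~\ref{Stirling}, perform a change of grammars into the elementary symmetric functions $e_1,\ldots,e_{k+1}$ associated with $\operatorname{X}_{k+1}$, and then realize the resulting grammar by a grammatical labeling of increasing plane trees. By Lemma~\ref{Stirling}, $C_n(x_1,\ldots,x_{k+1})=D_{G_1}^n(x_1)=D_{G_1}^{n-1}(e_{k+1})$, since $D_{G_1}(x_1)=e_{k+1}$.

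The crucial step is to compute $D_{G_1}$ on each elementary symmetric function. Writing $e_j=\sum_{1\le i_1<\cdots<i_j\le k+1}x_{i_1}\cdots x_{i_j}$ and using $D_{G_1}(x_i)=e_{k+1}$ together with the Leibniz rule, each monomial of $e_j$ contributes $e_{k+1}$ times the sum of its $j$ squarefree degree-$(j-1)$ subproducts; collecting these, every squarefree degree-$(j-1)$ monomial is produced exactly $(k+1)-(j-1)=k-j+2$ times, so
$$D_{G_1}(e_j)=(k-j+2)\,e_{k+1}e_{j-1},\qquad 1\le j\le k+1,$$
with the convention $e_0=1$. The right-hand sides lie in $\mathbb{Q}[e_1,\ldots,e_{k+1}]$ and $e_1,\ldots,e_{k+1}$ are algebraically independent, so $D_{G_1}$ restricts to the derivation of the context-free grammar $H_k=\{e_1\to(k+1)e_{k+1},\ e_2\to k\,e_1e_{k+1},\ \ldots,\ e_{k+1}\to e_ke_{k+1}\}$ on the alphabet $\{e_1,\ldots,e_{k+1}\}$, whence $C_n(x_1,\ldots,x_{k+1})=D_{H_k}^{n-1}(e_{k+1})$ as a polynomial in $e_1,\ldots,e_{k+1}$. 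For $k=2$ this is precisely the grammar $H$ of~\eqref{G8def} under $u=e_1$, $v=e_2$, $w=e_3$.

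Next I would give a grammatical labeling of increasing plane trees that realizes $H_k$. Since $k\ge n-2$, every vertex of an increasing plane tree on $[n]$ has degree at most $n-1\le k+1$, so the assignment ``a vertex of degree $d$ carries the label $e_{k+1-d}$'' uses only the letters $e_0,e_1,\ldots,e_{k+1}$; let the weight of a tree be the product of its vertex labels. The one-vertex tree has weight $e_{k+1}=D_{H_k}^0(e_{k+1})$. Passing from a tree on $[m]$ to one on $[m+1]$ means attaching $m+1$ as a new child of some vertex $v$; if $v$ had degree $d$ there are $d+1$ positions among the children of $v$ to do this, after which $v$ has degree $d+1$ and $m+1$ is a leaf, so the factor $e_{k+1-d}$ of the weight is replaced by $(d+1)\,e_{k-d}e_{k+1}$, which, with $j=k+1-d$, is exactly the rule $e_j\to(k-j+2)e_{j-1}e_{k+1}$ of $H_k$. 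Since degrees never exceed $n-1\le k+1$ at any stage, an induction on $n$ gives
$$D_{H_k}^{n-1}(e_{k+1})=\sum_{T}\ \prod_{v\in T}e_{k+1-\deg v},$$
the sum being over all increasing plane trees $T$ on $[n]$, which under the hypothesis on $k$ are exactly the $0$-$1$-$2$-$\cdots$-$k$-$(k+1)$ increasing plane trees on $[n]$.

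Finally I would regroup by degree sequences: if $T$ has $i_{d+1}$ vertices of degree $d$ for $0\le d\le n-1$, then its weight is $\prod_{d=0}^{n-1}e_{k+1-d}^{\,i_{d+1}}=e_{k+1}^{i_1}e_k^{i_2}\cdots e_{k-n+2}^{i_n}$ and $\gamma(n;i_1,\ldots,i_n)$ is by definition the number of such $T$, giving~\eqref{main01}. The constraints are read off directly: $i_1+\cdots+i_n=n$ is the vertex count; $i_1\ge 1$ since every tree has a leaf and $i_1\le n-1$ since for $n\ge 2$ the root has a child; and $i_n$ counts vertices of degree $n-1$, of which there is at most one, which must then be the root with all other vertices as leaf-children, forcing $i_1=n-1$ when $i_n=1$. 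As all $\gamma(n;i_1,\ldots,i_n)$ are nonnegative, this $e$-positive expansion refines the exchangeability of the variables of $C_n(x_1,\ldots,x_{k+1})$ of Janson--Kuba--Panholzer. The only genuinely new input is the change-of-grammars identity $D_{G_1}(e_j)=(k-j+2)e_{k+1}e_{j-1}$; the labeling argument and the bookkeeping are routine analogues of the ternary ($k=2$) case, so I expect that identity, and checking that the labeling of plane trees with $\le k+1$ children never overflows, to be the only points requiring care.
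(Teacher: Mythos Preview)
Your proposal is correct and follows essentially the same route as the paper: start from Lemma~\ref{Stirling}, compute the change of grammars $D_{G_1}(e_j)=(k-j+2)e_{j-1}e_{k+1}$ to obtain the grammar on the $e_j$'s, and then realize this grammar via the labeling ``degree $d$ vertex $\mapsto e_{k+1-d}$'' on increasing plane trees. The paper does the algebraic induction and the tree-labeling interpretation as two separate steps, whereas you merge them; you are also more explicit than the paper in reading off the constraints on $(i_1,\ldots,i_n)$ and in explaining why the hypothesis $k\geqslant n-2$ keeps the labeling within $\{e_0,\ldots,e_{k+1}\}$.
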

\begin{proof}
Let $G_1$ be the grammar given in Lemma~\ref{Stirling}. Consider a change of $G_1$.
Note that $D_{G_1}(x_1)=e_{k+1},~D_{G_1}(e_{i})=(k-i+2)e_{i-1}e_{k+1}$ for $1\leqslant i\leqslant k+1$. Thus we get a new grammar
\begin{equation}\label{grammar002}
G_{2}=\{x_1\rightarrow e_{k+1},~e_{i}\rightarrow (k-i+2)e_{i-1}e_{k+1}~\text{for $1\leqslant i\leqslant k+1$}\},
\end{equation}
Note that $G_{2}(x_1)=e_{k+1},~G_{2}^2(x_1)=e_ke_{k+1},~G_{2}^3(x_1)=e_k^2e_{k+1}+2e_{k-1}e_{k+1}^2$,
$$D_{G_2}^4(x_1)=e_k^3e_{k+1}+8e_{k-1}e_ke_{k+1}^2+6e_{k-2}e_{k+1}^3,$$
$$D_{G_2}^5(x_1)=e_k^4e_{k+1}+22e_k^2e_{k-1}e_{k+1}^2+16e_{k-1}^2e_{k+1}^3+42e_{k-2}e_ke_{k+1}^3+24e_{k-3}e_{k+1}^4.$$
By induction, we assume that
\begin{equation}\label{DG2x1}
G_{2}^n(x_1)=\sum \gamma(n;i_1,i_2,\ldots,i_n)e_{k-n+2}^{i_n}e_{k-n+3}^{i_{n-1}}\cdots e_k^{i_2} e_{k+1}^{i_1}.
\end{equation}
Note that
\begin{align*}
G_{2}^{n+1}(x_1)&=G_{2}\left(\sum \gamma(n;i_1,i_2,\ldots,i_n)e_{k-n+2}^{i_n}e_{k-n+3}^{i_{n-1}}\cdots e_k^{i_2} e_{k+1}^{i_1}\right)\\
&=\sum ni_n\gamma(n;i_1,i_2,\ldots,i_n)e_{k-n+1}e_{k-n+2}^{i_n-1}e_{k-n+3}^{i_{n-1}}\cdots e_k^{i_2} e_{k+1}^{i_1+1}+\\
&\sum (n-1)i_{n-1}\gamma(n;i_1,i_2,\ldots,i_n)e_{k-n+2}^{i_n+1}e_{k-n+3}^{i_{n-1}-1}\cdots e_k^{i_2} e_{k+1}^{i_1+1}+\cdots+\\
&\sum 2i_2\gamma(n;i_1,i_2,\ldots,i_n)e_{k-n+2}^{i_n}e_{k-n+3}^{i_{n-1}}\cdots e_{k-1}^{i_3+1}e_k^{i_2-1} e_{k+1}^{i_1+1}+\\
&\sum i_1\gamma(n;i_1,i_2,\ldots,i_n)e_{k-n+2}^{i_n}e_{k-n+3}^{i_{n-1}}\cdots e_k^{i_2+1} e_{k+1}^{i_1},
\end{align*}
which yields that the expansion~\eqref{DG2x1} holds for $n+1$. Combining Lemma~\ref{Stirling} and~\eqref{DG2x1}, we get~\eqref{main01}.
By induction, one can verify that $i_1+i_2+\cdots+i_n=n$, $1\leqslant i_1\leqslant n-1$, $i_n=1$ or $i_n=0$.

Using~\eqref{grammar002}, the combinatorial interpretation of
$\gamma(n;i_1,i_2,\ldots,i_n)$ can be proved along the same lines as the proof of~\cite[Theorem~4.1]{Chen21}.
However, we give a direct proof of it for our purpose.
Let $T$ be a 0-1-2-$\cdots$-k-(k+1) increasing plane tree on $[n]$. The labeling of
$T$ is given by labeling a degree $i$ vertex by $e_{k-i+1}$ for all $0\leqslant i\leqslant k+1$.
In particular, label a leaf by $e_{k+1}$ and label a degree $k+1$ vertex by $1$.
Let $T'$ be a 0-1-2-$\cdots$-k-(k+1)
increasing plane tree on $[n+1]$ by adding $n+1$ to $T$ as a leaf.
We can add $n+1$ to $T$ only as a child of a vertex $v$ that is not of degree $k+1$.
For $1\leqslant i\leqslant k+1$, if the vertex $v$ is a degree $k-i+1$ vertex with label $e_i$,
there are $k-i+2$ cases to attach $n+1$ (from left to right, say). In either case, in $T'$, the vertex $v$ becomes a
degree $k-i+2$ with label $e_{i-1}$ and $n+1$ becomes a leaf with label $e_{k+1}$. Hence
the insertion of $n+1$ corresponds to the substitution rule $e_{i}\rightarrow (k-i+2)e_{i-1}e_{k+1}$.
Therefore, $G_{2}(x_1)$ equals the sum of the weights
of 0-1-2-$\cdots$-(k+1) increasing plane trees on $[n]$, and the combinatorial interpretation of $\gamma(n;i_1,i_2,\ldots,i_n)$ follows. This completes the proof.
\end{proof}

By using $G_{2}^{n+1}(x_1)=G_{2}\left(G_{2}^{n}(x_1)\right)$, it is routine to verify that
\begin{align*}
&\gamma(n+1;1,n,0\ldots,0)=\gamma(n;1,n-1,0,\ldots,0)=1,\\
&\gamma(n+1;n,0,\ldots,0,1)=n\gamma(n;n-1,0,\ldots,0,1)=n!,\\
&\gamma(n+1;i_1,i_2,\ldots,i_n,0)=i_1\gamma(n;i_1,i_2-1,i_3,\ldots,i_n)+\\
&\sum_{j=2}^{n-1}j(i_j+1)\gamma(n;i_1-1,i_2,\ldots,i_{j-1},i_j+1,i_{j+1}-1,i_{j+2}\ldots,i_n).
\end{align*}

Note that $\gamma(3;2,0,1,0,\ldots,0)=2,~\gamma(4;2,1,1,0,\ldots,0)=8$ and
$$\gamma(n+1;2,n-2,1,0,\ldots,0)=2\gamma(n;2,n-3,1,0,\ldots,0)+2(n-1)\gamma(n;1,n-1,0,\ldots,0).$$
By induction, it is easy to verify that
\begin{equation}\label{gamman3}
\gamma(n;2,n-3,1,0,\ldots,0)=2^n-2n~\text{for $n\geqslant 3$}.
\end{equation}
Recall that the second-order Eulerian numbers $C_{n,j}$ satisfy the recurrence relation
\begin{equation*}\label{secondEu-recu}
C_{n+1,j}=jC_{n,j}+(2n+2-j)C_{n,j-1},
\end{equation*}
with the initial conditions $C_{1,1}=1$ and $C_{1,j}=0$ if $j\neq 1$ (see~\cite{Bona08,Gessel78}). In particular,
\begin{equation*}
C_{n,2}=2^{n+1}-2(n+1).
\end{equation*}
Comparing this with~\eqref{gamman3}, we see that
$\gamma(n;2,n-3,1,0,\ldots,0)=C_{n-1,2}$ for $n\geqslant 3$.
Following Janson~\cite{Janson08}, the number $C_{n,j}$ equals the number of increasing plane trees on $[n+1]$ with $j$ leaves.
So we immediately get the following result.
\begin{proposition}
For $n\geqslant 2$ and $1\leqslant j\leqslant n-1$, we have
$$C_{n-1,j}=\sum_{i_2+i_3+\cdots+i_{n}=n-j} \gamma(n;j,i_2,\ldots,i_{n-1},i_n).$$
\end{proposition}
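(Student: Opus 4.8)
The plan is to observe that both sides of the asserted identity count the same objects, namely the increasing plane trees on $[n]$ with exactly $j$ leaves.

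First I would recall the combinatorial meaning of the coefficients $\gamma(n;i_1,i_2,\ldots,i_n)$ supplied by Theorem~\ref{mainthm02}. Since the hypothesis $k\geqslant n-2$ forces $k+1\geqslant n-1$, and an increasing plane tree on $[n]$ has maximum out-degree $n-1$ (attained only by the star rooted at $1$), the out-degree bound in ``$0$-$1$-$\cdots$-$(k+1)$ increasing plane tree on $[n]$'' is vacuous. Hence $\gamma(n;i_1,i_2,\ldots,i_n)$ is simply the number of increasing plane trees on $[n]$ having exactly $i_\ell$ vertices of out-degree $\ell-1$ for every $\ell\in[n]$. In particular $i_1$ is the number of leaves, and since every such tree has $n$ vertices the degree profile always satisfies $i_1+i_2+\cdots+i_n=n$.

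Next, fix $j$ with $1\leqslant j\leqslant n-1$ and specialize $i_1=j$. Then the side constraint $i_2+i_3+\cdots+i_n=n-j$ in the sum is automatic, and as $(i_2,\ldots,i_n)$ ranges over the admissible tuples the corresponding families of increasing plane trees on $[n]$ (with $j$ leaves and with non-leaf out-degree profile $(i_2,\ldots,i_n)$) partition the set of all increasing plane trees on $[n]$ with exactly $j$ leaves. Consequently $\sum_{i_2+\cdots+i_n=n-j}\gamma(n;j,i_2,\ldots,i_n)$ equals the number of increasing plane trees on $[n]$ with $j$ leaves.

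Finally, I would invoke the interpretation of the second-order Eulerian numbers recalled just before the statement, due to Janson~\cite{Janson08}: $C_{m,j}$ is the number of increasing plane trees on $[m+1]$ with $j$ leaves. Taking $m=n-1$ identifies $C_{n-1,j}$ with the number of increasing plane trees on $[n]$ with $j$ leaves, matching the count of the previous paragraph, and the identity follows. There is no serious obstacle; the one point to state carefully is the vacuity of the out-degree bound under $k\geqslant n-2$, which is what lets $\gamma(n;\cdot)$ be read as counting \emph{all} increasing plane trees on $[n]$. Alternatively one could sum the displayed recursion for $\gamma(n+1;i_1,\ldots,i_n,0)$ over all $(i_2,\ldots,i_n)$ with $i_1=j$ fixed, check that $f(n,j):=\sum_{i_2+\cdots+i_n=n-j}\gamma(n;j,i_2,\ldots,i_n)$ satisfies $f(n+1,j)=jf(n,j)+(2n-j)f(n,j-1)$ with the matching initial data, and note that this is exactly the recursion obeyed by $C_{n-1,j}$; but the enumerative argument above is shorter and is the route I would take.
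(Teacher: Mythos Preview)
Your proposal is correct and follows exactly the route the paper takes: the paper simply writes ``So we immediately get the following result'' after recalling Janson's interpretation of $C_{n,j}$ as the number of increasing plane trees on $[n+1]$ with $j$ leaves, and your argument makes that immediacy explicit by summing the degree-profile counts $\gamma(n;j,i_2,\ldots,i_n)$ over all non-leaf profiles. Your remark that the out-degree bound in ``0-1-$\cdots$-$(k{+}1)$ increasing plane tree on $[n]$'' is vacuous when $k\geqslant n-2$ is a worthwhile clarification that the paper leaves implicit.
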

\bibliographystyle{amsplain}

\end{document}